\newcommand{\R}{{\mathbb R}}
\newcommand{\N}{{\mathbb N}}
\newcommand{\Z}{{\mathbb Z}}
\theoremstyle{plain}
\numberwithin{equation}{section}
\newtheorem{thm}{Theorem}[section]
\newtheorem{lemma}[thm]{Lemma}
\newtheorem{definition}[thm]{Definition}
\newtheorem{proposition}[thm]{Proposition}
\newtheorem{corollary}[thm]{Corollary}
\begin{document}

\setcounter{page}{1}

\title[Product Representation of Fibonacci Numbers]{On the Product Representation of Number Sequences, with Application to the Fibonacci Family}
\author{Michelle Rudolph-Lilith}
\address{Unit\'e de Neurosciences, Information et Complexit\'e, CNRS\\
		1 Ave de la Terrasse\\
		91198 Gif-sur-Yvette\\
		France}
\email{rudolph@unic.cnrs-gif.fr}
\thanks{Research supported in part by CNRS. The author wishes to thank LE Muller II and OD Little for valuable comments.}


\begin{abstract}
We investigate general properties of number sequences which allow explicit representation in terms of products. We find that such sequences form whole families of number sequences sharing similar recursive identities. Restricting to the cosine of fractional angles, we then study the special case of the family of $k$-generalized Fibonacci numbers, and present general recursions and identities which link these sequences.
\end{abstract}

\maketitle


\section{Introduction}

It has long been known that Fibonacci and Pell numbers, defined by
\begin{equation}
\label{Eq_Fibs}
F_0 = 0, F_1 = 1, F_n = F_{n-1} + F_{n-2}
\end{equation}
and
\begin{equation}
\label{Eq_Pells}
P_0 = 0, P_1 = 1, P_n = 2 P_{n-1} + P_{n-2}
\end{equation}
with $n \geq 2$, respectively, can be represented in product form (e.g., see \cite{Rutherford47, Rutherford52, Lind65, Zeitlin67, Shapiro94}), specifically
\begin{equation}
F_n 
= \prod\limits_{l=1}^{\lfloor \frac{n-1}{2} \rfloor} \left( 3 + 2 \cos \left[ \frac{2 l \pi}{n} \right] \right)
\equiv \prod\limits_{l=1}^{n-1} \left( 1 - 2i \cos \left[ \frac{l \pi}{n} \right] \right)
\end{equation}
and
\begin{equation}
P_n 
= 2^{\lfloor \frac{n}{2} \rfloor} \prod\limits_{l=1}^{\lfloor \frac{n-1}{2} \rfloor} \left( 3 + \cos \left[ \frac{2 l \pi}{n} \right] \right)
\equiv \prod\limits_{l=1}^{n-1} \left( 2 - 2i \cos \left[ \frac{l \pi}{n} \right] \right)
\end{equation}
$\forall n \in \N, n \geq 2$. In fact, already in the original solution to Problem H-64 \cite{Lind65}, Zeitlin showed that
\begin{equation}
\label{Eq_ProdRepLucas}
L_n^{(p,q)} = q^{\frac{n-1}{2}} \prod\limits_{l=1}^{n-1} \left( \frac{p}{\sqrt{q}} - 2 \cos \left[ \frac{l \pi}{n} \right] \right)
\end{equation}
$p,q \in \R$, provides a valid product representation of all members in general Lucas sequences, the latter being defined by the recursive relation
\begin{equation}
L_0^{(p,q)} = 0, L_1^{(p,q)} = 1, L_n^{(p,q)} = p L_{n-1}^{(p,q)} - q L_{n-2}^{(p,q)}
\end{equation}
for $n \geq 2$ \cite{Zeitlin67}. Later, expressions of the form (\ref{Eq_ProdRepLucas}) were used to obtain other explicit representations of the corresponding number sequences in terms of finite power series in the sequence parameters (e.g., see \cite{AndreJeannin94, HendelCook96, Cigler03, Sun06}), thus highlighting the importance and usefulness of such product representations for the investigation of number sequences.

In this contribution, we will show that product representations of number sequences can also be utilised to establish direct links between different sequences. To that end, we formulate

\begin{definition} 
\label{Def_Family}
{\bf (Family of Number Sequences)} Let $\{ x_{n,l} \}$ with $x_{n,l} \in \R$ and $n, l \in \N$ be an arbitrary two-parameter set of numbers. The corresponding family of number sequences $\{ X_{n,m} \}$ is defined by the set of all $X_{n,m}$ with
\begin{equation}
\label{Eq_Xnm}
X_{n,m} = \prod\limits_{l=1}^n ( m + x_{n,l} ) \, ,
\end{equation}
where $m \in \Z$ labels the individual number sequences within the family, and $n$ the members of each sequence. 
\end{definition}

A concrete example of such a family is given if we set $x_{n,l} = - 2 i \cos[ \tfrac{l \pi}{n+1} ]$. In this case, using (\ref{Eq_ProdRepLucas}) with $q = -1$ and $p \in \Z$, we have
\begin{equation}
\label{Eq_GenFibProd}
X_{n,p} = \prod\limits_{l=1}^n \left( p - 2i\cos[ \tfrac{l \pi}{n+1} ] \right) \equiv L_{n+1}^{(p,-1)} \, ,
\end{equation}
thus $X_{n,p}$ defines the family of generalized Fibonacci sequences $F_n^{(p)} := L_n^{(p,-1)}, n \geq 2$ obeying the recursive relation
\begin{equation}
\label{Eq_GenFibRec}
F_0^{(p)} = 0, F_1^{(p)} = 1, F_n^{(p)} = p F_{n-1}^{(p)} + F_{n-2}^{(p)} \, .
\end{equation}
We will call this family the Fibonacci family, and explore some of its properties in Section \ref{S_FibFamily}.

The paper is organized as follows. In Section \ref{S_ProdRep}, we will prove various general properties of the number sequences $X_{n,m}$ within a given family, with focus on linear recursive relations between the individual sequences. Two simple examples will be investigated in Section \ref{S_Examples}, and Section \ref{S_FibFamily} will focus on the Fibonacci family defined in (\ref{Eq_GenFibProd}). Some generalizations will be discussed at the end.


\section{Product Representation of Certain Number Sequences}
\label{S_ProdRep}

For any given set of numbers $\{ x_{n,l} \}$, we define
\begin{equation}
\label{Eq_calX0}
\mathscr{X}_n := \sum\limits_{l=1}^n x_{n,l} \, .
\end{equation}
We will first express $\mathscr{X}_n$ in terms of the associated number sequences $X_{n,m}$ defined in (\ref{Eq_Xnm}):

\begin{lemma}
\label{L_calX}
For any given set of numbers $\{ x_{n,l} \}$ with $x_{n,l} \in \R, n,l \in \N$, the sum over $x_{n,l}$ is given by
\begin{equation}
\label{Eq_calX}
\mathscr{X}_n = \frac{(-1)^n}{n!} \sum\limits_{l=1}^n (-1)^l \binom{n}{l} \, l \, X_{n,l} - \frac{1}{2} n (n+1) \, ,
\end{equation}
where $X_{n,m}, m \in \Z$ denotes the number sequences associated with $\{ x_{n,l} \}$.
\end{lemma}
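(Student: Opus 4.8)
The plan is to exploit the fact that, for fixed $n$, the value $X_{n,m}$ is the evaluation at the integer $t=m$ of the monic polynomial
$$\chi_n(t) := \prod_{l=1}^n (t + x_{n,l}) = t^n + \mathscr{X}_n\, t^{n-1} + \cdots,$$
whose coefficient of $t^{n-1}$ is precisely the first elementary symmetric function $\sum_{l=1}^n x_{n,l} = \mathscr{X}_n$. Thus $\mathscr{X}_n$ sits as a subleading coefficient, and the whole task reduces to extracting that coefficient from the sample values $X_{n,l} = \chi_n(l)$, $l = 1,\dots,n$, by means of the alternating weighted sum on the right-hand side of (\ref{Eq_calX}).

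First I would recognise that combination as a finite-difference functional. Introducing $Q(t) := t\,\chi_n(t)$, which is a polynomial of degree $n+1$ with leading coefficient $1$ and with coefficient of $t^n$ equal to $\mathscr{X}_n$, I note that the $l=0$ term of the sum carries a factor $l$ and hence vanishes, so that
$$\sum_{l=1}^n (-1)^l \binom{n}{l}\, l\, X_{n,l} = \sum_{l=0}^n (-1)^l \binom{n}{l}\, Q(l) = (-1)^n\, \Delta^n Q(0),$$
the $n$-th forward finite difference of $Q$ evaluated at the origin.

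Next I would invoke the classical evaluation of finite differences of monomials, $\Delta^n (t^j)\big|_{t=0} = n!\, S(j,n)$, where $S(j,n)$ are the Stirling numbers of the second kind; equivalently $\sum_{l=0}^n (-1)^l\binom{n}{l}\, l^j$ vanishes for $j<n$, equals $(-1)^n n!$ for $j=n$, and equals $(-1)^n n!\,\tfrac{n(n+1)}{2}$ for $j=n+1$, the last two values coming from $S(n,n)=1$ and $S(n+1,n)=\binom{n+1}{2}$. Applying this term by term to $Q(t)=t^{n+1}+\mathscr{X}_n\, t^n + (\text{lower order})$, every monomial of degree below $n$ is annihilated and only the top two survive, yielding
$$(-1)^n\, \Delta^n Q(0) = (-1)^n\, n!\left[\tfrac{n(n+1)}{2} + \mathscr{X}_n\right].$$
Multiplying by $(-1)^n/n!$ and solving for $\mathscr{X}_n$ reproduces (\ref{Eq_calX}) exactly, with the term $-\tfrac12 n(n+1)$ arising entirely from the leading $t^{n+1}$ contribution.

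The main obstacle is purely the bookkeeping at these two top orders: one must confirm that the $t^{n+1}$ coefficient of $Q$ generates the constant $\tfrac12 n(n+1)$ and that no intermediate monomials interfere, which is guaranteed by the vanishing $S(j,n)=0$ for $j<n$. If one prefers to bypass Stirling numbers, the same two coefficient evaluations follow directly from the operator identity $\Delta^n=(E-1)^n$ (with $E$ the unit shift) applied to $t^n$ and $t^{n+1}$, or by a short induction on $n$ using Pascal's recursion for $\binom{n}{l}$; either route keeps the argument elementary and self-contained.
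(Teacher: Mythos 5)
Your proof is correct, but it follows a genuinely different (and more compact) route than the paper's. The paper never views $X_{n,m}$ as a polynomial in $m$: it expands the product into the elementary symmetric functions $\mathcal{X}_n^{(p)}$ of the $x_{n,l}$, writes the $n$ evaluations $X_{n,1},\dots,X_{n,n}$ as a linear system in these unknowns with coefficient matrix $a_{ij}=i^{n-j}$, and solves for $\mathcal{X}_n^{(1)}=\mathscr{X}_n$ by repeatedly subtracting consecutive equations; after $n-1$ rounds of elimination this manufactures exactly the alternating binomial weights, and the residual power sum $\sum_{l=1}^n(-1)^l\binom{n}{l}\,l^{n+1}$ is then evaluated by citing Gould's identity (1.14). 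Your argument compresses all of this into a single application of the $n$-th finite difference: you observe that $l\,X_{n,l}=Q(l)$ for the degree-$(n+1)$ polynomial $Q(t)=t\,\chi_n(t)$ whose top two coefficients are $1$ and $\mathscr{X}_n$, and then use $\Delta^n t^j\big|_{t=0}=n!\,S(j,n)$ together with $S(j,n)=0$ for $j<n$, $S(n,n)=1$, $S(n+1,n)=\binom{n+1}{2}$. In hindsight the paper's iterated subtraction is a hand-rolled computation of that same $n$-th difference, and its appeal to Gould (1.14) is equivalent to your evaluation $S(n+1,n)=\binom{n+1}{2}$; what your packaging buys is transparency and brevity — it makes clear that the identity simply extracts the subleading coefficient of $\chi_n$, with the constant $-\tfrac12 n(n+1)$ coming from the leading term — while the paper's elimination is longer but entirely self-contained, never presupposing the finite-difference calculus of monomials beyond the one cited identity.
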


\begin{proof}
We first construct a system of equations by explicitly factorizing (\ref{Eq_Xnm}) for successive $m \in [1,n]$. To that end, we define
\begin{equation*}
\mathcal{X}_n^{(p)} := \sum\limits_{\substack{l_i=1 \\ i \in [1,p] \\ l_{j+1}>l_j \, \forall l_j}}^n \prod\limits_{i=1}^p x_{n,l_i}
\end{equation*}
and obtain from (\ref{Eq_Xnm}) a system of linear equations in $\mathcal{X}_n^{(p)}, p \in [1,n]$:
\begin{eqnarray*}
X_{n,1} & = & 1^n + 1^{(n-1)} \mathcal{X}_n^{(1)} + 1^{(n-1)} \mathcal{X}_n^{(2)} + \ldots + \mathcal{X}_n^{(n)} \\
X_{n,2} & = & 2^n + 2^{(n-1)} \mathcal{X}_n^{(1)} + 2^{(n-2)} \mathcal{X}_n^{(2)} + \ldots + \mathcal{X}_n^{(n)} \\
X_{n,3} & = & 3^n + 3^{(n-1)} \mathcal{X}_n^{(1)} + 3^{(n-2)} \mathcal{X}_n^{(2)} + \ldots + \mathcal{X}_n^{(n)} \\
        & \vdots & \\
X_{n,n} & = & n^n + n^{(n-1)} \mathcal{X}_n^{(1)} + n^{(n-2)} \mathcal{X}_n^{(2)} + \ldots + \mathcal{X}_n^{(n)} \, .
\end{eqnarray*}
This system can be written in more compact form as
\begin{equation}
\label{Eq_L1p1}
X_{n,i} - i^n = \sum\limits_{j=1}^{n} a_{ij} \mathcal{X}_n^{(j)} \, ,
\end{equation}
where $a_{ij} = i^{n-j}$, $i,j \in [1,n]$. What remains is to solve (\ref{Eq_L1p1}) for $\mathcal{X}_n^{(1)} \equiv \mathscr{X}_n$. To that end, we note that $a_{in} = 1, \forall i \in [1,n]$, which allows us to construct a new system of $n-1$ equations by subtracting successive equations in (\ref{Eq_L1p1}). We obtain
\begin{equation}
\label{Eq_L1p2}
X_{n,i+1} - X_{n,i} - \left( (i+1)^n - i^n \right) = \sum\limits_{j=1}^{n-1} a^{(1)}_{ij} \mathcal{X}_n^{(j)} \, ,
\end{equation}
where 
\begin{equation*}
a^{(1)}_{ij} = a_{i+1,j} - a_{ij} = \left( (i+1)^{n-j} - i^{n-j} \right) \equiv \sum_{l=0}^1 (-1)^{l+1} \binom{1}{l} (i+l)^{n-j}
\end{equation*}
for $i,j \in [1,n-1]$. Again, $a^{(1)}_{i,n-1} = 1, \forall i \in [1,n-1]$, and we can further reduce the system (\ref{Eq_L1p2}) by subtracting successive equations. After $m$ repetitions, we have
\begin{equation}
(-1)^m \sum\limits_{l=0}^m (-1)^l \binom{m}{l} X_{n,i+l} - (-1)^m \sum\limits_{l=0}^m (-1)^l \binom{m}{l} (i+l)^n
= \sum\limits_{j=1}^{n-m} a_{ij}^{(m)} \mathcal{X}_n^{(j)}
\end{equation}
for $i,j \in [1,n-m]$, where 
\begin{equation*}
a_{ij}^{(m)} = (-1)^m \sum\limits_{l=0}^m (-1)^l \binom{m}{l} (i+l)^{n-j}
\end{equation*}
with $a_{i,n-m}^{(m)} = m!$ for all $i \in [1,n-m]$. For $m=n-1$, we finally obtain
\begin{equation*}
(-1)^{n-1} \sum\limits_{l=0}^{n-1} (-1)^l \binom{n-1}{l} X_{n,1+l} - (-1)^{n-1} \sum\limits_{l=0}^{n-1} (-1)^l \binom{n-1}{l} (1+l)^n
= (n-1)! \, \mathcal{X}_n^{(1)} \, .
\end{equation*}
Changing the summation variable $l \rightarrow l+1$ and observing that 
\begin{equation*}
\frac{(-1)^n}{n!} \sum\limits_{l=1}^n (-1)^l \binom{n}{l} l^{n+1} = -\frac{1}{2} n (n+1)
\end{equation*}
(Gould (1.14), \cite{Gould72}),  we finally arrive at (\ref{Eq_calX}).
\end{proof}

Lemma \ref{L_calX} provides, for any given $n \geq 1$, an explicit representation of the sum over $x_{n,l}$, equation (\ref{Eq_calX}),  in terms of a finite linear combination of the number sequences $X_{n,m}$. With this, we can immediately formulate

\begin{lemma}
\label{L_calXm}
For any given set of numbers $\{ x_{n,l} \}$ with $x_{n,l} \in \R, n,l \in \N$ and associated family of number sequences $\{ X_{n,m} \}, m \in \Z$, the sum over $x_{n,l}$ obeys the identities
\begin{equation}
\label{Eq_calXm1}
\mathscr{X}_n = \frac{(-1)^n}{n!} \sum\limits_{l=1}^n (-1)^l \binom{n}{l} \, l \, X_{n,l+m} - \frac{1}{2} n (n+1) - n m 
\end{equation}
and, for $m \neq 0$,
\begin{equation}
\label{Eq_calXm2}
\mathscr{X}_n = \frac{(-1)^n}{n! \, m^{n-1}} \sum\limits_{l=1}^n (-1)^l \binom{n}{l} \, l \, X_{n,l m} - \frac{1}{2} n (n+1) m \, .  
\end{equation}
\end{lemma}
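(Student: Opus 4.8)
The plan is to derive both identities directly from Lemma \ref{L_calX} by applying it not to the original set $\{x_{n,l}\}$ but to suitably reparametrized sets, exploiting the fact that additive shifts and multiplicative rescalings of the $x_{n,l}$ interact cleanly with the product defining $X_{n,m}$ in (\ref{Eq_Xnm}).

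For (\ref{Eq_calXm1}) I would introduce the shifted set $x'_{n,l} := x_{n,l} + m$, which is again real for every $m \in \Z$, so Lemma \ref{L_calX} applies to it verbatim. The associated sequences satisfy $X'_{n,l} = \prod_{i=1}^n (l + x'_{n,i}) = \prod_{i=1}^n ((l+m) + x_{n,i}) = X_{n,l+m}$, while the corresponding sum is $\mathscr{X}'_n = \sum_{i=1}^n (x_{n,i}+m) = \mathscr{X}_n + nm$. Substituting these two relations into (\ref{Eq_calX}) written for the primed set and solving for $\mathscr{X}_n$ then yields (\ref{Eq_calXm1}), the extra term $-nm$ being precisely the contribution of the shift to the sum $\mathscr{X}'_n$.

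For (\ref{Eq_calXm2}) I would instead rescale, setting $x''_{n,l} := x_{n,l}/m$, which requires $m \neq 0$ and is again real. Now $X''_{n,l} = \prod_{i=1}^n (l + x_{n,i}/m) = m^{-n} \prod_{i=1}^n (lm + x_{n,i}) = m^{-n} X_{n,lm}$ and $\mathscr{X}''_n = m^{-1}\mathscr{X}_n$. Inserting these into (\ref{Eq_calX}) for the doubly-primed set and multiplying through by $m$ produces (\ref{Eq_calXm2}): the factor $m^{n-1}$ in the denominator arises from combining the $m^{-n}$ carried by $X''_{n,l}$ with the single power of $m$ recovered from $\mathscr{X}''_n = m^{-1}\mathscr{X}_n$, and the rescaled constant $-\tfrac{1}{2} n(n+1)m$ likewise.

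The argument involves no genuine obstacle beyond this bookkeeping: the only point that really needs checking is that both transformations remain within the hypotheses of Lemma \ref{L_calX}, namely that $x'_{n,l}$ and $x''_{n,l}$ are still real (immediate) and that the scaling step is legitimate only for $m \neq 0$ (exactly the stated restriction). Because the product structure of (\ref{Eq_Xnm}) turns an additive shift of the $x_{n,l}$ into an index shift $l \mapsto l+m$, and a multiplicative rescaling into an index scaling $l \mapsto lm$ accompanied by an overall factor $m^{-n}$, no separate induction or direct manipulation of the binomial sum is required — the two identities are simply Lemma \ref{L_calX} read off in shifted and in scaled coordinates, respectively.
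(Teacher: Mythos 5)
Your proposal is correct and follows essentially the same route as the paper: the paper likewise applies Lemma \ref{L_calX} to the shifted set $(m' + x_{n,l})$, using $X_{n,m}^{(m')} = X_{n,m+m'}$ and $\mathscr{X}_n^{(m')} = \mathscr{X}_n + nm'$, and to the rescaled set $x_{n,l}/m'$, using $\tilde{X}_{n,m}^{(m')} = m'^{-n} X_{n,mm'}$ and $\tilde{\mathscr{X}}_n^{(m')} = \mathscr{X}_n/m'$. Your bookkeeping of the resulting constants, the factor $m^{n-1}$, and the restriction $m \neq 0$ all match the paper's argument.
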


\begin{proof}
We first prove (\ref{Eq_calXm1}). Let us define $X_{n,m}$ and $\mathscr{X}_n$ for the set of numbers $(m' + x_{n,l})$:
\begin{eqnarray*}
X_{n,m}^{(m')} := \prod\limits_{l=1}^n \left( m + ( m' + x_{n,l} ) \right) \\
\mathscr{X}_n^{(m')} := \sum\limits_{l=1}^n ( m' + x_{n,l} )
\end{eqnarray*}
for arbitrary $m' \in \Z$. From the first equation and definition (\ref{Eq_Xnm}), it follows immediately that $X_{n,m}^{(m')} = X_{n,m+m'}$ and $\mathscr{X}_n^{(m')} = n m' + \mathscr{X}_n$, which together with (\ref{Eq_calX}) yield (\ref{Eq_calXm1}).

The second relation (\ref{Eq_calXm2}) can be shown in a similar fashion. We define $X_{n,m}$ and $\mathscr{X}_n$ for the set of numbers $x_{n,l} / m', m' \in \Z, m \neq 0$:
\begin{eqnarray*}
\tilde{X}_{n,m}^{(m')} := \prod\limits_{l=1}^n \left( m + \frac{x_{n,l}}{m'} \right) \\
\tilde{\mathscr{X}}_n^{(m')} := \sum\limits_{l=1}^n \frac{x_{n,l}}{m'} \, ,
\end{eqnarray*}
from which follows that 
\begin{equation}
\tilde{X}_{n,m}^{(m')} = \frac{1}{m'^n} \prod\limits_{l=1}^n ( m m' + x_{n,l} ) = \frac{1}{m'^n} X_{n,mm'}
\end{equation}
and $\tilde{\mathscr{X}}_n^{(m')} = \mathscr{X}_n / m'$. Using again (\ref{Eq_calX}), we obtain (\ref{Eq_calXm2}).
\end{proof} 

Lemma \ref{L_calXm} is interesting in various respects. It not just generalizes (\ref{Eq_calX}), but also shows that various combinations of $X_{n,m}$ within a given family of number sequences must yield the same result $\mathscr{X}_n$. This, in turn, allows us to construct general relations between $X_{n,m}$, which will hold for all families of number sequences $\{ X_{n,m} \}$ representable in product form (\ref{Eq_Xnm}), and constitute the main result of this contribution. We can formulate

\begin{proposition}
\label{P_XnmIdent}
The members $X_{n,m}$ of a given family of number sequences $\{ X_{n,m} \}, m \in Z$ and $n \in \N$, obey the general recursive relation
\begin{equation}
\label{Eq_XnmRec}
X_{n,m+1} = (-1)^n \sum\limits_{l=1}^n (-1)^l \binom{n}{l-1} X_{n,l+m-n} + n!
\end{equation}
and are subject to the identity
\begin{equation}
\label{Eq_XnmNeg1}
\frac{1}{m^{n-1}} \sum\limits_{l=1}^n (-1)^l \binom{n}{l} \, l \, X_{n,lm}
= \sum\limits_{l=1}^n (-1)^l \binom{n}{l} \, l \, X_{n,l} + \frac{(-1)^{n-1}}{2} (1-m)n(n+1)!
\end{equation}
for $m \neq 0$.
\end{proposition}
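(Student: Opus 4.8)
The plan is to treat the two assertions separately, since they rest on different earlier facts. The identity (\ref{Eq_XnmNeg1}) is essentially a corollary of Lemma \ref{L_calXm}: both (\ref{Eq_calX}) and (\ref{Eq_calXm2}) are expressions for the \emph{same} quantity $\mathscr{X}_n$, so I would simply set them equal. After equating, I clear the common factor $(-1)^n/n!$ by multiplying through by $(-1)^n n!$; the two sums then appear with the stated normalizations $1/m^{n-1}$ and $1$ respectively, and the leftover constant collapses to $(-1)^n n!\,\tfrac{1}{2}n(n+1)(m-1)$. The only thing to verify is that this matches the stated constant $\tfrac{(-1)^{n-1}}{2}(1-m)n(n+1)!$, which follows from $(n+1)\cdot n! = (n+1)!$ together with the sign identity $(-1)^{n-1}(1-m) = (-1)^n(m-1)$.

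For the recursion (\ref{Eq_XnmRec}) I would exploit the fact, immediate from (\ref{Eq_Xnm}), that for fixed $n$ the map $m \mapsto X_{n,m} = \prod_{l=1}^n(m+x_{n,l})$ is a \emph{monic} polynomial of degree $n$ in $m$. The key classical fact is that the $n$-th iterate of the forward difference operator $\Delta f(m) := f(m+1)-f(m)$, applied to a monic degree-$n$ polynomial, returns the constant $n!$; explicitly,
\begin{equation*}
\sum_{k=0}^n (-1)^{n-k}\binom{n}{k} X_{n,m+k} = n! \, ,
\end{equation*}
since the top-degree term contributes $\Delta^n m^n = n!$ and every lower-degree term is annihilated by $\Delta^n$. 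Because the second index of $X_{n,\cdot}$ ranges over all of $\Z$ by Definition \ref{Def_Family}, there is no issue in evaluating this at shifted or negative arguments.

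From here the proof is bookkeeping. I would substitute $m \mapsto m+1-n$ in the difference relation so that its highest term, $k=n$, becomes exactly $X_{n,m+1}$, then isolate that term:
\begin{equation*}
X_{n,m+1} = n! - (-1)^n \sum_{k=0}^{n-1} (-1)^k \binom{n}{k} X_{n,m+1-n+k} \, .
\end{equation*}
Re-indexing by $l = k+1$ turns $\binom{n}{k}$ into $\binom{n}{l-1}$, the argument $m+1-n+k$ into $l+m-n$, and the sign $(-1)^k$ into $-(-1)^l$, which absorbs the outer minus and reproduces (\ref{Eq_XnmRec}) exactly.

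I expect the main obstacle to be purely clerical: keeping the index shifts and the sign bookkeeping consistent so that the finite-difference expansion lines up with the precise form $\binom{n}{l-1} X_{n,l+m-n}$ demanded by the statement, and confirming the two constant terms agree after the sign manipulations. Conceptually, once the monic-polynomial/finite-difference observation and the ``one quantity, two formulas'' principle underlying Lemma \ref{L_calXm} are in hand, both claims are immediate.
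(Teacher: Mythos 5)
Your proposal is correct, and it splits naturally into a ``same as the paper'' half and a ``genuinely different'' half. For the identity (\ref{Eq_XnmNeg1}) you do exactly what the paper does: equate the two expressions for $\mathscr{X}_n$ from Lemma \ref{L_calXm} (namely (\ref{Eq_calXm2}) and (\ref{Eq_calXm1}) at $m=0$, i.e.\ (\ref{Eq_calX})), clear the common factor $(-1)^n/n!$, and simplify the constant via $(n+1)\,n!=(n+1)!$ and $(-1)^n(m-1)=(-1)^{n-1}(1-m)$. For the recursion (\ref{Eq_XnmRec}), however, your route differs from the paper's. The paper stays inside the framework of Lemma \ref{L_calXm}: it writes (\ref{Eq_calXm1}) for $m\rightarrow m-n+1$ and for $m\rightarrow m-n$, subtracts the two identities, and collapses the result using the Pascal-type identity $(l+1)\binom{n}{l+1}+l\binom{n}{l}=n\binom{n}{l}$. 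You instead observe that, by (\ref{Eq_Xnm}), $m\mapsto X_{n,m}$ is a \emph{monic} polynomial of degree $n$ in $m$, so its $n$-th forward difference is identically $n!$, i.e.
\begin{equation*}
\sum_{k=0}^n (-1)^{n-k}\binom{n}{k}X_{n,m+k}=n! \, ,
\end{equation*}
and (\ref{Eq_XnmRec}) drops out by shifting $m\mapsto m+1-n$, isolating the $k=n$ term, and reindexing $l=k+1$; your sign and index bookkeeping ($(-1)^{n-k}=(-1)^n(-1)^k$, $(-1)^{l-1}=-(-1)^l$, $\binom{n}{k}=\binom{n}{l-1}$) is accurate. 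What your argument buys: it bypasses Lemmas \ref{L_calX} and \ref{L_calXm} entirely for this half of the proposition, and it makes transparent that (\ref{Eq_XnmRec}) does not depend on the product structure per se --- it holds for any sequence that is monic of degree $n$ in $m$, the product representation serving only to guarantee monicity. What the paper's route buys: the whole proposition is kept as a formal consequence of a single computational core (Lemma \ref{L_calX} and its corollary Lemma \ref{L_calXm}), the same machinery that produces (\ref{Eq_XnmNeg1}); it is worth noting, though, that the repeated ``subtract successive equations'' step in the proof of Lemma \ref{L_calX} is precisely the finite-difference operator you invoke directly, so your proof can be read as extracting the short path hidden inside the paper's longer one.
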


\begin{proof}
The proof of (\ref{Eq_XnmRec}) utilizes (\ref{Eq_calXm1}) for $m \rightarrow m-n+1$ and $m \rightarrow m-n$, yielding
\begin{eqnarray*}
\mathscr{X}_n 
& = & \frac{(-1)^n}{n!} \sum\limits_{l=1}^n (-1)^l \binom{n}{l} \, l \, X_{n,l+m-n+1} - \frac{1}{2} n (n+1) - n (m-n+1) \\ 
& = & \frac{(-1)^n}{n!} \sum\limits_{l=1}^{n-1} (-1)^l \binom{n}{l} \, l \, X_{n,l+m-n+1} + \frac{1}{(n-1)!} X_{n,m+1} - \frac{1}{2} n (n+1) - n (m-n+1)
\end{eqnarray*}
and
\begin{eqnarray*}
\mathscr{X}_n 
& = & \frac{(-1)^n}{n!} \sum\limits_{l=1}^n (-1)^l \binom{n}{l} \, l \, X_{n,l+m-n} - \frac{1}{2} n (n+1) - n (m-n) \\
& = & \frac{(-1)^n}{n!} \sum\limits_{l=1}^{n-1} (-1)^{l+1} \binom{n}{l+1} \, (l+1) \, X_{n,l+m-n+1} + \frac{(-1)^{n+1}}{(n-1)!} X_{n,m-n+1} \\
&   & - \frac{1}{2} n (n+1) - n (m-n) \, ,
\end{eqnarray*}
respectively, where in the last step $l \rightarrow l-1$ was used. Subtracting both identities and observing that $(l+1) \binom{n}{l+1} + l \binom{n}{l} = n \binom{n}{l}$, we obtain
\begin{eqnarray*}
0 
& = & \frac{(-1)^{n+1}}{(n-1)!} \sum\limits_{l=1}^{n-1} (-1)^l \binom{n}{l} X_{n,l+m-n+1} + \frac{(-1)^{n+1}}{(n-1)!} X_{n,m-n+1} - \frac{1}{(n-1)!} X_{n,m+1} + n \\
& = & \frac{(-1)^{n+1}}{(n-1)!} \sum\limits_{l=0}^{n-1} (-1)^l \binom{n}{l} X_{n,l+m-n+1} - \frac{1}{(n-1)!} X_{n,m+1} + n,
\end{eqnarray*}
which, after a change of the summation variable $l \rightarrow l+1$, yields (\ref{Eq_XnmRec}).

In a similar fashion, (\ref{Eq_XnmNeg1}) is a direct consequence of subtracting equations (\ref{Eq_calXm1}) and (\ref{Eq_calXm2}) in Lemma \ref{L_calXm}.
\end{proof}

Equation (\ref{Eq_XnmRec}) in Proposition \ref{P_XnmIdent} provides general linear recursions in $m \in \Z$ for $X_{n,m}$. The form of these recursions depends on $n$, and contains an increasing number of terms for increasing $n$. Specifically, for any given $n$, (\ref{Eq_XnmRec}) expresses $X_{n,m}$ in terms of $X_{n,m'}$ with $m' \in [m-n,m-1]$. Based on these recursions, using the generating function approach, we can deduce explicit identities which express $X_{n,m}$ and $X_{n,-m}$ for $m \geq n$, in terms of $X_{n,m'}$ with $m' \in [0,n-1]$ and $m' \in [-n+1,0]$, respectively:

\begin{corollary}
\label{Cor_Xnm1}
For any given family of number sequences $\{ X_{n,m} \}$, the following identities hold
\begin{eqnarray}
X_{n,m} & = & \sum\limits_{l=0}^{n-1} (-1)^{n+l} \frac{n-l}{l-m} \binom{m}{n} \binom{n}{l} X_{n,l} + \frac{m!}{(m-n)!} \label{Eq_XnmExp1} \\
X_{n,-m} & = & \sum\limits_{l=0}^{n-1} (-1)^{n+l} \frac{n-l}{l-m} \binom{m}{n} \binom{n}{l} X_{n,-l} + (-1)^n \frac{m!}{(m-n)!}\label{Eq_XnmExp2}
\end{eqnarray}
for all $n \in \N$ and $m \in \Z$ with $m \geq n$.
\end{corollary}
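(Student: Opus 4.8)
The plan is to bypass the generating-function machinery and instead exploit the one structural fact that makes everything transparent: for fixed $n$, the quantity $X_{n,m}=\prod_{l=1}^n(m+x_{n,l})$ is a \emph{monic polynomial of degree $n$ in the label $m$}. This is immediate from the product form \eqref{Eq_Xnm}, but it is also exactly what the recursion \eqref{Eq_XnmRec} encodes: reindexing \eqref{Eq_XnmRec} (substitute $m\to m-1$ and put $i=n+1-l$) rewrites it as the constant-coefficient finite-difference equation
\[
\sum_{i=0}^{n}(-1)^i\binom{n}{i}\,X_{n,m-i}=n!\,,
\]
i.e. the $n$-th backward difference of $X_{n,m}$ in $m$ is the constant $n!$, which is precisely the hallmark of a monic degree-$n$ polynomial. (This is also why a generating-function solution of \eqref{Eq_XnmRec} would produce only polynomial solutions: the associated symbol is $(z-1)^n$, a single root $z=1$ of multiplicity $n$.) Either way, $X_{n,m}$ is a monic degree-$n$ polynomial in $m$, and I would build the proof on that alone.

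First I would subtract off the leading behaviour. Writing $\frac{m!}{(m-n)!}=m(m-1)\cdots(m-n+1)=n!\binom{m}{n}$ for the monic falling factorial, the difference $Q(m):=X_{n,m}-\frac{m!}{(m-n)!}$ is a polynomial of degree at most $n-1$. Since the falling factorial vanishes at each of the $n$ integer nodes $m=0,1,\dots,n-1$, we have $Q(l)=X_{n,l}$ there, so $Q$ is the unique interpolating polynomial of degree $\le n-1$ through the data $\{(l,X_{n,l})\}_{l=0}^{n-1}$. Lagrange's formula then gives $Q(m)=\sum_{l=0}^{n-1}X_{n,l}\,\ell_l(m)$ with $\ell_l(m)=\prod_{j\ne l}\frac{m-j}{l-j}$, and \eqref{Eq_XnmExp1} follows once $\ell_l(m)$ is put in closed form.

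The one genuinely computational step — and the place where sign errors lurk — is the simplification of $\ell_l(m)$. I would evaluate the numerator as $\prod_{j=0,\,j\ne l}^{n-1}(m-j)=\frac{1}{m-l}\prod_{j=0}^{n-1}(m-j)=\frac{n!}{m-l}\binom{m}{n}$ and the denominator as $\prod_{j=0,\,j\ne l}^{n-1}(l-j)=(-1)^{n-1-l}\,l!\,(n-1-l)!$. Combining these and using $\frac{n!}{l!\,(n-1-l)!}=(n-l)\binom{n}{l}$ together with $\frac{1}{m-l}=-\frac{1}{l-m}$ and $(-1)^{n-1-l}=-(-1)^{n+l}$ collapses $\ell_l(m)$ to exactly $(-1)^{n+l}\frac{n-l}{l-m}\binom{m}{n}\binom{n}{l}$, which is the coefficient in \eqref{Eq_XnmExp1}. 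Throughout, the hypothesis $m\ge n$ guarantees $l-m\ne 0$ for every $l\in[0,n-1]$, so no interpolation node is singular.

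Finally, for \eqref{Eq_XnmExp2} I would apply \eqref{Eq_XnmExp1} to the \emph{reflected} family generated by $x'_{n,l}:=-x_{n,l}$. Its members satisfy $X'_{n,m}=\prod_{l=1}^n(m-x_{n,l})=(-1)^n\prod_{l=1}^n(-m+x_{n,l})=(-1)^n X_{n,-m}$, and in particular $X'_{n,l}=(-1)^n X_{n,-l}$. Substituting into \eqref{Eq_XnmExp1} and cancelling the common factor $(-1)^n$ on the interpolation terms leaves a single surviving factor $(-1)^n$ on the falling-factorial term, which is precisely \eqref{Eq_XnmExp2}. The main obstacle here is therefore not conceptual but purely bookkeeping: getting every sign and binomial right in the reduction of $\ell_l(m)$. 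The interpolation idea itself is essentially forced once one notices that $X_{n,m}$ is monic of degree $n$ in $m$.
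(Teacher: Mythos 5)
Your proof is correct, but it takes a genuinely different route from the paper's. The paper proves (\ref{Eq_XnmExp1})--(\ref{Eq_XnmExp2}) with generating functions: it multiplies the recursion (\ref{Eq_XnmRec}) by $z^m$, sums over $m \geq n$, reorders the resulting triple sum, and then invokes the binomial summation
\begin{equation*}
\sum\limits_{k=l}^{n-2} (-1)^{n+k} \binom{n}{k+1} \binom{m+k-l}{n-1}
= \binom{m+n-l-1}{n-1} - (-1)^{n+l} \frac{n-l}{l-m} \binom{m}{n} \binom{n}{l}
\end{equation*}
to extract the coefficient of $z^m$, dispatching (\ref{Eq_XnmExp2}) with ``can be shown in a similar fashion.'' You instead use the structural fact that, for fixed $n$, the product (\ref{Eq_Xnm}) makes $X_{n,m}$ a monic polynomial of degree $n$ in the label $m$: subtracting the monic falling factorial $m!/(m-n)! = n!\binom{m}{n}$, which vanishes at the nodes $m = 0,\dots,n-1$, leaves a polynomial of degree at most $n-1$ agreeing with $X_{n,l}$ at those nodes, so it must be the Lagrange interpolant; your closed-form reduction of the Lagrange basis $\ell_l(m)$ to $(-1)^{n+l}\frac{n-l}{l-m}\binom{m}{n}\binom{n}{l}$ is sign-correct, and the reflected family $x'_{n,l} = -x_{n,l}$, with $X'_{n,m} = (-1)^n X_{n,-m}$, yields (\ref{Eq_XnmExp2}) in one line. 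What your approach buys: it is more elementary (no generating function, no nontrivial binomial identity), it explains conceptually where the coefficients come from, it proves the second identity by an honest reduction rather than an appeal to similarity, and it shows that (\ref{Eq_XnmExp1}) is really a polynomial identity in $m$, valid beyond the stated range once the denominators $l-m$ are cleared. What the paper's approach buys: it derives the corollary from the recursion (\ref{Eq_XnmRec}) alone, without reference to the product form---although, as your backward-difference observation $\sum_{i=0}^{n}(-1)^i\binom{n}{i}X_{n,m-i}=n!$ shows, any sequence satisfying that recursion is forced to be a monic degree-$n$ polynomial in $m$ anyway, so nothing is actually lost in your version.
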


\begin{proof}
We start with (\ref{Eq_XnmRec}) for $m+1 \rightarrow m$
\begin{equation*}
X_{n,m} = \sum\limits_{l=1}^n (-1)^{n+l} \binom{n}{l-1} X_{n,l+m-n-1} + n!
\end{equation*}
and define the general generating function
\begin{equation*}
A[z] := \sum\limits_{m \geq 0} X_{n,m} z^m
\end{equation*}
for arbitrary $z \in \R, z \neq 0$. Multiplication of $X_{n,m}$ with $z^m$ and summation over $m \geq n$ yields after lengthy yet straightforward manipulations
\begin{eqnarray*}
A[z] 
& = & \sum\limits_{m=0}^{n-1} X_{n,m} z^m + \sum\limits_{m \geq n} \sum\limits_{l=0}^{n-1} \binom{m-l+n-1}{n-1} X_{n,l} z^m \\
&   & - \sum\limits_{m \geq n} \sum\limits_{k=1}^n \sum\limits_{l=0}^{k-2} (-1)^{n+k} \binom{n}{k-1} \binom{m-l+k-2}{n-1} X_{n,l} z^m + \sum\limits_{m \geq n} \binom{m}{n} n! z^m \\
& = & \sum\limits_{m=0}^{n-1} X_{n,m} z^m + \sum\limits_{m \geq n} \sum\limits_{l=0}^{n-1} \binom{m-l+n-1}{n-1} X_{n,l} z^m \\
&   & - \sum\limits_{m \geq n} z^m \sum\limits_{l=0}^{n-2} X_{n,l} \sum\limits_{k=l}^{n-2} (-1)^{n+k} \binom{n}{k+1} \binom{m+k-l}{n-1} + \sum\limits_{m \geq n} \binom{m}{n} n! z^m \, ,
\end{eqnarray*}  
where in the last step, for each $m$, we appropriately reordered terms occurring in the third sum. Collecting coefficients for any given $m \geq n$ and observing that 
\begin{equation*}
\sum\limits_{k=l}^{n-2} (-1)^{n+k} \binom{n}{k+1} \binom{m+k-l}{n-1} 
= \binom{m+n-l-1}{n-1} - (-1)^{n+l} \frac{n-l}{l-m} \binom{m}{n} \binom{n}{l} \, ,
\end{equation*}
we obtain (\ref{Eq_XnmExp1}). Equation (\ref{Eq_XnmExp2}) can be shown in a similar fashion.
\end{proof}

Finally, equation (\ref{Eq_XnmRec}) also allows to deduce a number of general identities the number sequences $X_{n,m}$ of any given family must obey. Specifically, we have

\begin{corollary}
\label{Cor_Xnm2}
For $n,p,q \in \N, n \geq p + 1$ with $p \geq 1$, $0 \leq q < p$ and $m \in \Z$, the sequences $X_{n,m}$ of any given family of number sequences $\{ X_{n,m} \}$ are subject to the following identities:
\begin{eqnarray}
0 & = & \sum\limits_{l=0}^n (-1)^l \binom{n}{l} \, l^q \, X_{n-p,m-n+l} \label{Eq_XnmId1} \\
0 & = & \sum\limits_{l=0}^n (-1)^l \binom{n}{l} \, l^p \, X_{n-p,m-n+l} - (-1)^n n! \, .\label{Eq_XnmId2}
\end{eqnarray}
\end{corollary}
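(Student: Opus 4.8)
The plan is to notice that the whole statement is really a statement about finite differences of a polynomial, once one reads $X_{n-p,\,\cdot}$ through its defining product. Fix $n$ and $p$ with $n\ge p+1$ (so $n-p\ge 1$), and regard $m$ as a fixed integer. Definition \eqref{Eq_Xnm}, applied with first index $n-p$, gives
\[
X_{n-p,\,m-n+l}=\prod_{j=1}^{n-p}\bigl((m-n+l)+x_{n-p,j}\bigr),
\]
and each factor is $l+(m-n+x_{n-p,j})$, i.e.\ monic and linear in the running variable $l$. Hence, viewed as a function of $l$, the quantity $X_{n-p,\,m-n+l}$ is a \emph{monic} polynomial of degree exactly $n-p$, and therefore $P_q(l):=l^{q}\,X_{n-p,\,m-n+l}$ is a polynomial in $l$ of degree $q+(n-p)$, with leading coefficient $1$ precisely when $q=p$.

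Next I would record the elementary fact that for any polynomial $P$,
\[
\sum_{l=0}^{n}(-1)^{l}\binom{n}{l}P(l)=(-1)^{n}\,(\Delta^{n}P)(0),
\]
where $\Delta$ is the forward difference operator; this follows from $(\Delta^{n}P)(0)=\sum_{l=0}^{n}(-1)^{n-l}\binom{n}{l}P(l)$ together with $(-1)^{n-l}=(-1)^{n+l}$. The two standard properties of $\Delta^{n}$ then finish everything: it annihilates every polynomial of degree strictly less than $n$, and it maps a monic polynomial of degree exactly $n$ to the constant $n!$. For $0\le q<p$ the polynomial $P_q$ has degree $q+(n-p)<n$, so the alternating sum vanishes, which is exactly \eqref{Eq_XnmId1}; for $q=p$ the polynomial $P_p$ is monic of degree $n$, so the alternating sum equals $(-1)^{n}(\Delta^{n}P_p)(0)=(-1)^{n}n!$, and transposing this constant yields \eqref{Eq_XnmId2}.

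I do not expect a genuine obstacle: no term of the recursion \eqref{Eq_XnmRec} needs to be touched, since the entire content is already encoded in the product form \eqref{Eq_Xnm}. The only places that require care are bookkeeping rather than difficulty — namely confirming that the leading coefficient in the $q=p$ case is exactly $1$ (so the right-hand constant comes out as $(-1)^{n}n!$ rather than a multiple of it), and tracking the sign in the finite-difference identity. This viewpoint also explains transparently why the threshold sits precisely at $q=p$: it is the deficit $p=n-(n-p)$ between the binomial order $n$ and the polynomial degree $n-p$ that dictates when the $n$-th difference kills the sum and when it returns $n!$. Should one prefer to stay within the stated framework and derive the identities from \eqref{Eq_XnmRec} directly, one can iterate that recursion, but the finite-difference route is both shorter and more illuminating.
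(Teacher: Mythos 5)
Your proof is correct, and it takes a genuinely different route from the paper. The paper establishes these identities entirely inside its recursive framework: starting from the recursion \eqref{Eq_XnmRec}, it runs a double induction --- first in $p$ for the case $q=0$ (repeatedly subtracting shifted instances of the recursion and absorbing terms via Pascal's rule), then in $q$ using $\binom{n}{l}=\tfrac{n}{n-l}\binom{n-1}{l}$ --- and handles \eqref{Eq_XnmId2} by a further induction combining \eqref{Eq_XnmRec} with \eqref{Eq_XnmId1}. You instead bypass the recursion altogether and read everything off the defining product \eqref{Eq_Xnm}: since $X_{n-p,\,m-n+l}=\prod_{j=1}^{n-p}\bigl(l+(m-n+x_{n-p,j})\bigr)$ is a monic polynomial of degree $n-p$ in $l$, the sums in question are $n$-th finite differences of polynomials of degree $q+(n-p)$, and the two standard facts about $\Delta^n$ (it kills degree $<n$; it sends monic degree-$n$ polynomials to $n!$) yield both identities at once, signs included. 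Your argument is shorter, verifiably correct at every step, and more illuminating --- it makes transparent why the threshold sits exactly at $q=p$ and why the constant is $(-1)^n n!$. What the paper's longer route buys is a different kind of information: it shows the corollary is a formal consequence of the recursion \eqref{Eq_XnmRec} alone, so the identities would persist for any doubly indexed sequence satisfying that recursion, whereas your argument is tied to the product representation (which is, of course, how the families are defined here, so nothing is lost for the paper's purposes).
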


\begin{proof}
We first show that (\ref{Eq_XnmId1}) is valid for the special case $q=0$, i.e. 
\begin{equation*} 
0 = \sum\limits_{l=0}^n (-1)^l \binom{n}{l} X_{n-p,m-n+l} \, .
\end{equation*}
This can be shown by induction in $p$. From (\ref{Eq_XnmRec}), considering $X_{n,m}$, after change of the summation variable $l \rightarrow l+1$ and observing that $(-1)^{2n+1} \binom{n}{n} \equiv -1$, we have
\begin{equation*}
0 = (-1)^{n+1} \sum\limits_{l=0}^n (-1)^l \binom{n}{l} X_{n,m-n+l} + n! \, ,
\end{equation*}
which yields for $n \rightarrow n-1 \geq 1$ and arbitrary $m$ 
\begin{equation*}
0 = (-1)^{n} \sum\limits_{l=0}^{n-1} (-1)^l \binom{n-1}{l} X_{n-1,m-n+l+1} + (n-1)! \, ,
\end{equation*}
and for $m \rightarrow m-1$
\begin{equation*}
0 = (-1)^{n} \sum\limits_{l=0}^{n-1} (-1)^l \binom{n-1}{l} X_{n-1,m-n+l} + (n-1)! \, .
\end{equation*}
Subtracting the last two identities yields
\begin{eqnarray*}
0 
& = & (-1)^{n} \sum\limits_{l=0}^{n-2} (-1)^l \binom{n-1}{l} X_{n-1,m-n+l+1} - X_{n-1,m} \\
&   & - (-1)^n X_{n-1,m-n} - (-1)^{n} \sum\limits_{l=1}^{n-1} (-1)^l \binom{n-1}{l} X_{n-1,m-n+l} \\
& = & - (-1)^n X_{n-1,m-n} + (-1)^{n+1} \sum\limits_{l=1}^{n-1} (-1)^l \binom{n}{l} X_{n-1,m-n+l} - X_{n-1,m} \\
& \equiv & (-1)^{n+1} \sum\limits_{l=0}^{n} (-1)^l \binom{n}{l} X_{n-1,m-n+l} \, ,
\end{eqnarray*}
where in the first step the summation variable of the first sum was changed according to $l \rightarrow l-1$, and $\binom{n-1}{l-1} + \binom{n-1}{l} = \binom{n}{l}$ was used. This proves (\ref{Eq_XnmId1}) for the special case $q=0$ for $p=1$. Assuming now (\ref{Eq_XnmId1}) is true for $q=0$ and a given $p \geq 1$, we have for $n \rightarrow n-1$
\begin{equation*}
0 = (-1)^{n} \sum\limits_{l=0}^{n-1} (-1)^l \binom{n-1}{l} X_{n-1-p,m-(n-1)+l}
\equiv (-1)^{n} \sum\limits_{l=0}^{n-1} (-1)^l \binom{n-1}{l} X_{n-(p+1),m+1-n+l} \, ,
\end{equation*}
which yields for $m \rightarrow m-1$
\begin{equation*}
0 = (-1)^{n} \sum\limits_{l=0}^{n-1} (-1)^l \binom{n-1}{l} X_{n-(p+1),m-n+l} \, .
\end{equation*}
Subtracting the last two equations, we obtain after manipulations similar to the $p=1$ case above
\begin{eqnarray*}
0 
& = & - (-1)^n X_{n-(p+1),m-n} + (-1)^{n+1} \sum\limits_{l=1}^{n-1} (-1)^l \binom{n}{l} X_{n-(p+1),m-n+l} - X_{n-(p+1),m} \\
& \equiv & (-1)^{n+1} \sum\limits_{l=0}^{n} (-1)^l \binom{n}{l} X_{n-(p+1),m-n+l} \, ,
\end{eqnarray*}
thus proving (\ref{Eq_XnmId1}) for $q=0$ and $p+1$.

Identity (\ref{Eq_XnmId1}) for $0 < q < p$ can be shown by induction in $q$. Above we demonstrated the validity of (\ref{Eq_XnmId1}) for $q=0$ and all $p \geq 1$. Assuming now that (\ref{Eq_XnmId1}) is true for a given $q \geq 0$ and all $p \geq q+1$, we obtain for $n \rightarrow n-1$ and $m \rightarrow m-1$
\begin{eqnarray*}
0 
& = & \sum\limits_{l=0}^{n-1} (-1)^l \binom{n-1}{l} \, l^q \, X_{n-p,m-n+l} \\
& = & \sum\limits_{l=0}^{n-1} (-1)^l \binom{n}{l} \, \frac{n-l}{n} \, l^q \, X_{n-p,m-n+l} \\
& = & \sum\limits_{l=0}^{n-1} (-1)^l \binom{n}{l} \, l^q \, X_{n-p,m-n+l} 
      - \frac{1}{n} \sum\limits_{l=0}^{n-1} (-1)^l \binom{n}{l} \, l^{q+1} \, X_{n-p,m-n+l} \\
& = & -(-1)^n n^q X_{n-p,m} - \frac{1}{n} \sum\limits_{l=0}^{n-1} (-1)^l \binom{n}{l} \, l^{q+1} \, X_{n-p,m-n+l} \\
& \equiv & \sum\limits_{l=0}^{n} (-1)^l \binom{n}{l} \, l^{q+1} \, X_{n-p,m-n+l} \, ,
\end{eqnarray*}
where the binomial identity $\binom{n}{l} = \frac{n}{n-l} \binom{n-1}{l}$ was utilized, thus proving (\ref{Eq_XnmId1}) for $q+1 \leq p$, i.e. $q < p$.

Identity (\ref{Eq_XnmId2}) can be shown in an equivalent fashion through induction in $p$, utilizing (\ref{Eq_XnmRec}) and (\ref{Eq_XnmId1}).
\end{proof}

We finally note that the recursive relation (\ref{Eq_XnmRec}) and identities listed in Corollaries \ref{Cor_Xnm1} and \ref{Cor_Xnm2} are general and hold for each family of number sequences $\{ X_{n,m} \}$, thus suggesting that all families constructed from number sequences of the form (\ref{Eq_calX}) are governed by identical relationships between their sequences. In the next Section, we will elaborate on this property, and briefly consider two simple examples, before illustrating the application to Fibonacci numbers in Section \ref{S_FibFamily}.


\section{Two Simple Examples of Families of Number Sequences}
\label{S_Examples}


\subsection{The Family of Power Sequences}

Let $x_{n,l} = c = const \in \R$. In this case, we have
\begin{eqnarray}
X_{n,m} & = & \prod\limits_{l=1}^n ( m + c ) \equiv (m+c)^n \label{Eq_FPowerX} \\
\mathscr{X}_n & = & \sum\limits_{l=1}^n c \equiv nc \label{Eq_FPowerCalX} \, .
\end{eqnarray}
The first few members of this family, for $c=0$, are listed in Table \ref{Tab_FPower}. With the results presented in the previous Section, we can immediately formulate

\begin{table}
\caption{\label{Tab_FPower} The first members of the family of power sequences $X_{n,m} = m^n, n \in \N$ and $m \in \Z$, equation (\ref{Eq_FPowerX}) with $c=0$.}
\begin{tabular}{|c|cccccccccc|}
\hline
\diagbox{n}{m} & \ldots & 0 & 1 & 2 & 3 & 4 & 5 & 6 & 7 & \ldots \\
\hline
1 & & 0 & 1 & 2 & 3 & 4 & 5 & 6 & 7 & \\
2 & & 0 & 1 & 4 & 9 & 16 & 25 & 36 & 49 & \\
3 & & 0 & 1 & 8 & 27 & 64 & 125 & 216 & 343 & \\
4 & & 0 & 1 & 16 & 81 & 256 & 625 & 1296 & 2401 & \\
5 & & 0 & 1 & 32 & 243 & 1024 & 3125 & 7776 & 16807 & \\
6 & & 0 & 1 & 64 & 729 & 4096 & 15625 & 46656 & 117649 & \\
7 & & 0 & 1 & 128 & 2187 & 16384 & 78125 & 279936 & 823543 & \\
\vdots & & & & & & & & & & \\
\hline
\end{tabular}
\end{table}

\begin{corollary}
\label{Cor_FPower}
The family of power sequences $X_{n,m} = (m+c)^n$ obeys for all $c \in \R$
\begin{eqnarray*}
\sum\limits_{l=1}^n (-1)^l \binom{n}{l} l (l+m+c)^n & = & \frac{1}{2} (-1)^n (2m+2c+n+1) n n! \\
\sum\limits_{l=1}^n (-1)^l \binom{n}{l} l (lm+c)^n & = & \frac{1}{2} (-1)^n m^{n-1} \left( 2c+m(n+1) \right) n n! \\
\sum\limits_{l=0}^n (-1)^l \binom{n}{l} (m+c+1-l)^n & = & n! \\
\sum\limits_{l=1}^n (-1)^l \binom{n}{l} l \left( (lm+c)^n - m^{n-1} (l+c)^n \right) & = & \frac{1}{2} (-1)^{n-1} m^{n-1} (1-m) n(n+1)! 
\end{eqnarray*}
for all $n \in \N$ and $m \in \Z$, 
\begin{eqnarray*}
\sum\limits_{l=0}^{n-1} (-1)^l \binom{n}{l} \frac{n-l}{l-m} (c+l)^n & = & (-1)^n \left( \frac{(m-n)!}{m!} (c+m)^n - 1 \right) n! \\
\sum\limits_{l=0}^{n-1} (-1)^l \binom{n}{l} \frac{n-l}{l-m} (c-l)^n & = & (-1)^n \left( \frac{(m-n)!}{m!} (c-m)^n - (-1)^n \right) n! 
\end{eqnarray*}
for all $n, m \in \N$ with $m \geq n$, and 
\begin{eqnarray*}
\sum\limits_{l=0}^n (-1)^l \binom{n}{l} (n-l)^q (m+c-l)^{n-p} & = & 0 \qquad \text{for } 0 \leq q < p \\
\sum\limits_{l=0}^n (-1)^l \binom{n}{l} (n-l)^p (m+c-l)^{n-p} & = & n!
\end{eqnarray*}
for all $n,p \in \N$ with $n \geq p+1, p \geq 1$ and $m \in \Z$. 
\end{corollary}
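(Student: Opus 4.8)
The plan is to recognize that Corollary \ref{Cor_FPower} is nothing more than the specialization of the general machinery of Section \ref{S_ProdRep} to the power family, for which (\ref{Eq_FPowerX}) and (\ref{Eq_FPowerCalX}) supply the closed forms $X_{n,m}=(m+c)^n$ and $\mathscr{X}_n=nc$. Consequently no new induction or generating-function argument is required: each displayed identity is produced by inserting these two closed forms into an already-established general relation and clearing denominators. I would match the identities to their sources as follows: the first two to the two expressions for $\mathscr{X}_n$ in Lemma \ref{L_calXm}, equations (\ref{Eq_calXm1}) and (\ref{Eq_calXm2}); the third to the recursion (\ref{Eq_XnmRec}) of Proposition \ref{P_XnmIdent}; the fourth to the identity (\ref{Eq_XnmNeg1}); the fifth and sixth to the expansions (\ref{Eq_XnmExp1}) and (\ref{Eq_XnmExp2}) of Corollary \ref{Cor_Xnm1}; and the final pair to (\ref{Eq_XnmId1}) and (\ref{Eq_XnmId2}) of Corollary \ref{Cor_Xnm2}.

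First I would dispatch the three $\mathscr{X}_n$-type relations. Substituting $\mathscr{X}_n=nc$ and $X_{n,l+m}=(l+m+c)^n$ into (\ref{Eq_calXm1}) and solving for the sum gives, after collecting the constants into $\tfrac{n}{2}(2m+2c+n+1)$, the first identity; the analogous substitution $X_{n,lm}=(lm+c)^n$ into (\ref{Eq_calXm2}) produces the second, with prefactor $\tfrac{n}{2}m^{n-1}(2c+m(n+1))$. Subtracting these, or equivalently specializing (\ref{Eq_XnmNeg1}) directly and multiplying through by $m^{n-1}$, yields the fourth identity. The third identity is the specialization of the recursion: writing (\ref{Eq_XnmRec}) in the reduced form $0=(-1)^{n+1}\sum_{l=0}^{n}(-1)^{l}\binom{n}{l}X_{n,m-n+l}+n!$ already used in the proof of Corollary \ref{Cor_Xnm2}, substituting $X_{n,m-n+l}=(m-n+l+c)^n$, reversing the index $l\to n-l$ and shifting $m\to m+1$ reproduces $\sum_{l=0}^{n}(-1)^{l}\binom{n}{l}(m+c+1-l)^n=n!$.

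For the fifth and sixth identities I would insert $X_{n,m}=(m+c)^n$, $X_{n,l}=(l+c)^n$ into (\ref{Eq_XnmExp1}), and $X_{n,-m}=(c-m)^n$, $X_{n,-l}=(c-l)^n$ into (\ref{Eq_XnmExp2}). The only manipulation beyond substitution is to use $\binom{m}{n}^{-1}=\tfrac{n!\,(m-n)!}{m!}$, which converts the prefactor $\tfrac{m!}{(m-n)!}$ into the $\tfrac{(m-n)!}{m!}(c\pm m)^n$ terms on the right, together with splitting $(-1)^{n+l}=(-1)^n(-1)^l$ to pull the global sign outside the sum. The final pair follows from (\ref{Eq_XnmId1}) and (\ref{Eq_XnmId2}) with $X_{n-p,m-n+l}=(m-n+l+c)^{n-p}$ and the same reversal $l\to n-l$, which turns $l^q$ into $(n-l)^q$ and $m-n+l$ into $m-l$, thereby matching the stated $(n-l)^q(m+c-l)^{n-p}$ form (and, for (\ref{Eq_XnmId2}), carrying the $(-1)^n n!$ term to the right-hand side as $n!$).

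There is no genuine mathematical obstacle here, since all the substantive work was done in Section \ref{S_ProdRep}; the task is entirely bookkeeping. The one step demanding real care is keeping signs and index directions consistent under the reversal $l\to n-l$ and the shifts $m\to m\pm 1$, in particular distinguishing the factors $(-1)^n$, $(-1)^l$ and $(-1)^{n+l}$ so that the constant terms — the bare $n!$, the $(-1)^n n!$, and the $\tfrac{1}{2}(\cdots)\,n\,n!$ expressions — emerge with the correct sign. A convenient internal check at each stage is to set $c=0$ with small $n$ and compare against Table \ref{Tab_FPower}.
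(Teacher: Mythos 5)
Your proposal is correct and takes essentially the same approach as the paper: the paper's proof of Corollary \ref{Cor_FPower} is precisely a direct specialization of Lemmata \ref{L_calX} and \ref{L_calXm}, Proposition \ref{P_XnmIdent}, and Corollaries \ref{Cor_Xnm1} and \ref{Cor_Xnm2}, using (\ref{Eq_FPowerX}) and (\ref{Eq_FPowerCalX}). Your identity-by-identity matching and the accompanying index manipulations (the reversal $l \to n-l$, the shift $m \to m+1$, and the sign bookkeeping) are all accurate and merely spell out the substitutions the paper leaves implicit.
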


\begin{proof}
All identities in Corollary \ref{Cor_FPower} are a direct consequence of Lemmata \ref{L_calX} and \ref{L_calXm}, Proposition \ref{P_XnmIdent} and Corollaries \ref{Cor_Xnm1} and \ref{Cor_Xnm2}, using (\ref{Eq_FPowerX}) and (\ref{Eq_FPowerCalX}).
\end{proof}

We note that Corollary \ref{Cor_FPower} yields a number of interesting combinatorial, in particular binomial, identities and their generalizations. Specifically, for $c=0$, Gould (1.13), (1.14) and (1.47) are recovered \cite{Gould72}. Furthermore, for any fixed $n$, $X_{n,m}$ yields the sequence of $n$th powers of subsequent integers $m$. The third relation in Corollary \ref{Cor_FPower}, for $c=0$, provides then the general form of the $n$th-order linear homogeneous recursions in $m$ with constant coefficients for such sequences:
\begin{equation}
\label{Eq_FPowerRecM}
(m+1)^n = \sum\limits_{l=0}^{n-1} (-1)^l \binom{n}{l+1} (m-l)^n + n!
\end{equation}
$\forall m \in \Z$. For example, restricting to $m \geq 0$, we obtain for $n=2$ the sequence of square numbers $a_m = m^2$ \cite{OEIS1}, obeying the known linear recursion
\begin{equation*}
a_0 = 0, a_1 = 1, a_{m+1} = 2 a_m - a_{m-1} + 2, m \geq 1
\end{equation*}
(see M. Kristof, 2005, \cite{OEIS1}), for $n=3$ the sequence of cubes $a_m = m^3$ \cite{OEIS2}, obeying
\begin{equation*}
a_0 = 0, a_1 = 1, a_2 = 2^3, a_{m+1} = 3 a_m - 3 a_{m-1} + a_{m-2} + 6, m \geq 2
\end{equation*}
(see A. King, 2013, \cite{OEIS2}), and for $n=4$ the sequence $a_m = m^4$ \cite{OEIS3}, subject to the $4$th-order linear recursion
\begin{equation*}
a_0 = 0, a_1 = 1, a_2 = 2^4, a_3 = 3^4, a_{m+1} = 4 a_m - 6 a_{m-1} + 4 a_{m-2} - a_{m-3} + 24, m \geq 3
\end{equation*}
(see A. King, 2013, \cite{OEIS3}). 


\subsection{The Family of Pochhammer Sequences}

Let $x_{n,l} = l \in \N$. In this case, we have
\begin{eqnarray}
X_{n,m} & = & \prod\limits_{l=1}^n ( m + l ) \equiv (m+1)_n \label{Eq_FPochhammerX} \\
\mathscr{X}_n & = & \sum\limits_{l=1}^n l \equiv \frac{1}{2} n (n+1) \label{Eq_FPochhammerCalX} \, ,
\end{eqnarray}
where $(a)_n = \Gamma[a+n] / \Gamma[a]$ denotes the Pochhammer symbol. The first members of this family of sequences is visualized in Table \ref{Tab_FPochhammer}. With the results presented in the last Section, we can immediately formulate

\begin{corollary}
\label{Cor_FPochhammer}
The family of Pochhammer sequences $X_{n,m} = (m+1)_n$ obeys
\begin{eqnarray*}
\sum\limits_{l=1}^n (-1)^l \binom{n}{l} l (l+m)_n & = & (-1)^n (m+n) n n! \\
\sum\limits_{l=1}^n (-1)^l \binom{n}{l} (lm)_{n+1} & = & \frac{1}{2} (-1)^n m^n (1+m) n (n+1)! \\
\sum\limits_{l=0}^n (-1)^l \binom{n}{l} (l+m-n+1)_n & = & (-1)^n n! \\
\sum\limits_{l=1}^n (-1)^l \binom{n}{l} \left( (lm)_{n+1} - m^n (l)_{n+1} \right) & = & \frac{1}{2} (-1)^{n+1} m^n (1-m) n (n+1)! \\
\end{eqnarray*}
for all $n \in \N$ and $m \in \Z$, 
\begin{eqnarray*}
\sum\limits_{l=0}^{n-1} (-1)^l \binom{n}{l} \frac{n-l}{l-m} (1+l)_n & = & (-1)^n \left( \frac{(m-n)!}{m!} (1+m)_n - 1 \right) n! \\
\sum\limits_{l=0}^{n-1} (-1)^l \binom{n}{l} \frac{n-l}{l-m} (1-l)_n & = & (-1)^n \left( \frac{(m-n)!}{m!} (1-m)_n - (-1)^n \right) n! 
\end{eqnarray*}
for all $n, m \in \N$ with $m \geq n$, and 
\begin{eqnarray*}
\sum\limits_{l=0}^n (-1)^l \binom{n}{l} l^q (m-n+l+1)_{n-p} & = & 0 \qquad \text{for } 0 \leq q < p \\
\sum\limits_{l=0}^n (-1)^l \binom{n}{l} l^p (m-n+l+1)_{n-p} & = & n!
\end{eqnarray*}
for all $n,p \in \N$ with $n \geq p+1, p \geq 1$ and $m \in \Z$. 
\end{corollary}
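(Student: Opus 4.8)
The plan is to treat Corollary \ref{Cor_FPochhammer} exactly as Corollary \ref{Cor_FPower} is treated: it is nothing more than the specialization of the general machinery of Section \ref{S_ProdRep} to the single family determined by $x_{n,l} = l$. I would therefore prove none of the eight identities from scratch, but instead substitute the two defining data
\[
X_{n,m} = \prod_{l=1}^n (m+l) = (m+1)_n, \qquad \mathscr{X}_n = \sum_{l=1}^n l = \tfrac{1}{2} n(n+1),
\]
already recorded in (\ref{Eq_FPochhammerX}) and (\ref{Eq_FPochhammerCalX}), into the corresponding general statements and simplify. The one algebraic tool that does the real work is the Pochhammer recurrence $(a)_{n+1} = a\,(a+1)_n$, which absorbs the stray factor $l$ present in several of the general identities into the symbol itself; e.g. $l\,X_{n,l} = l\,(l+1)_n = (l)_{n+1}$ and $l\,X_{n,lm} = l\,(lm+1)_n = m^{-1}(lm)_{n+1}$.

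Concretely, I would match each of the first four identities (valid for all $n\in\N$, $m\in\Z$) to its source. The first follows from (\ref{Eq_calXm1}) of Lemma \ref{L_calXm} after the shift $m \to m-1$; substituting $\mathscr{X}_n = \tfrac12 n(n+1)$ and $X_{n,l+m}=(l+m+1)_n$ and clearing prefactors yields $(-1)^n(m+n)\,n\,n!$. The second comes from (\ref{Eq_calXm2}), where the identity $l\,(lm+1)_n = m^{-1}(lm)_{n+1}$ converts the left side into a sum over $(lm)_{n+1}$, and the factor $n!\,n(n+1)=n(n+1)!$ produces the stated right-hand side. The fourth is the direct image of the combined identity (\ref{Eq_XnmNeg1}) of Proposition \ref{P_XnmIdent} under the same two Pochhammer substitutions. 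The third is the $q=0$ form isolated inside the proof of Corollary \ref{Cor_Xnm2}, namely $\sum_{l=0}^n (-1)^l \binom{n}{l} X_{n,m-n+l} = (-1)^n n!$, read directly off (\ref{Eq_XnmRec}) with $X_{n,m-n+l} = (m-n+l+1)_n$.

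For the remaining four identities I would invoke the two Corollaries. The fifth and sixth (for $m\geq n$) are (\ref{Eq_XnmExp1}) and (\ref{Eq_XnmExp2}) of Corollary \ref{Cor_Xnm1} with $X_{n,m}=(m+1)_n$, $X_{n,l}=(1+l)_n$, $X_{n,-m}=(1-m)_n$ and $X_{n,-l}=(1-l)_n$; here one rearranges to isolate the sum, writes $\binom{m}{n}^{-1} = n!\,(m-n)!/m!$, and the factorial terms $m!/(m-n)!$ carry over unchanged. The seventh and eighth (for $n\geq p+1$) are (\ref{Eq_XnmId1}) and (\ref{Eq_XnmId2}) of Corollary \ref{Cor_Xnm2} with $X_{n-p,m-n+l} = (m-n+l+1)_{n-p}$.

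Since every ingredient is already established, the only genuine work is bookkeeping, and I expect no conceptual difficulty beyond it. The delicate point — the one I would check most carefully — is sign tracking of the surviving $(-1)^n$ factors under the index alignments. In particular, the un-reindexed substitution of (\ref{Eq_XnmId2}) yields $\sum_{l=0}^n (-1)^l \binom{n}{l}\,l^p\,(m-n+l+1)_{n-p} = (-1)^n n!$, so to obtain the clean right-hand side $n!$ one must either accept the extra $(-1)^n$ or perform the reflection $l \to n-l$ (which replaces $l^p$ by $(n-l)^p$ and the argument by $(m-l+1)_{n-p}$, exactly as in the last identity of Corollary \ref{Cor_FPower}); this is precisely where a sign is easiest to drop, and where I would reconcile the stated form against the direct specialization.
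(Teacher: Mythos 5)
Your proposal coincides with the paper's own proof, which likewise disposes of all eight identities in one stroke as direct specializations of Lemmata \ref{L_calX} and \ref{L_calXm}, Proposition \ref{P_XnmIdent} and Corollaries \ref{Cor_Xnm1} and \ref{Cor_Xnm2}, with the Pochhammer identity $l\,(lm+1)_n = (lm)_{n+1}/m$ doing the only genuine work; your identity-by-identity matching (including the shift $m \to m-1$ for the first one) is exactly right. The sign concern you raise about the last identity is justified and in fact exposes an error in the statement as printed: direct substitution of $X_{n-p,m-n+l} = (m-n+l+1)_{n-p}$ into (\ref{Eq_XnmId2}) gives right-hand side $(-1)^n n!$, and since the left-hand side here is left unreflected (it retains $l^p$, unlike the power-family analogue in Corollary \ref{Cor_FPower}, where the reflection $l \to n-l$ produces $(n-l)^p$ and absorbs the sign), the stated right-hand side $n!$ fails for odd $n$; for instance $n=3$, $p=1$ yields $\sum_{l=0}^{3}(-1)^l\binom{3}{l}\,l\,(m+l-2)_2 = -6 \neq 3!$. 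Note that the third identity of the corollary correctly carries the factor $(-1)^n n!$ for precisely this reason, so internal consistency also demands the same factor in the last one.
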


\begin{proof}
All identities in Corollary \ref{Cor_FPochhammer} are a direct consequence of Lemmata \ref{L_calX} and \ref{L_calXm}, Proposition \ref{P_XnmIdent} and Corollaries \ref{Cor_Xnm1} and \ref{Cor_Xnm2}, using (\ref{Eq_FPowerX}) and (\ref{Eq_FPowerCalX}) and the Pochhammer identity $l (lm+1)_n = (lm)_{n+1} / m$ valid $\forall m \in \Z$.
\end{proof}

\begin{table}
\caption{\label{Tab_FPochhammer} The first members of the family of Pochhammer sequences $X_{n,m} = (m+1)_n, n \in \N, m \in \Z$, equation (\ref{Eq_FPochhammerX}).}
\begin{tabular}{|c|cccccccccc|}
\hline
\diagbox{n}{m} & \ldots & 0 & 1 & 2 & 3 & 4 & 5 & 6 & 7 & \ldots \\
\hline
1 & & 1 & 2 & 3 & 4 & 5 & 6 & 7 & 8 & \\
2 & & 2 & 6 & 12 & 20 & 30 & 42 & 56 & 72 & \\
3 & & 6 & 24 & 60 & 120 & 210 & 336 & 504 & 720 & \\
4 & & 24 & 120 & 360 & 840 & 1680 & 3024 & 5040 & 7920 & \\
5 & & 120 & 720 & 2520 & 6720 & 15120 & 30240 & 55440 & 95040 & \\
6 & & 720 & 5040 & 20160 & 60480 & 151200 & 332640 & 665280 & 1235520 & \\
7 & & 5040 & 40320 & 181440 & 604800 & 1663200 & 3991680 & 8648640 & 17297280 & \\
\vdots & & & & & & & & & & \\
\hline
\end{tabular}
\end{table}

As in the case of the family of power sequences, for any given $n$, the relations listed in Corollary \ref{Cor_FPochhammer} provide links between Pochhammer numbers $(m)_n$ for different $m$. Specifically, the third identity yields the general linear recursive rule for sequences defined by $(m+1)_n$ for any fixed $n$:
\begin{equation}
\label{Eq_FPochhammerRecM}
(m+1)_n = \sum\limits_{l=0}^{n-1} (-1)^l \binom{n}{l+1} (m-l)_n + n!
\end{equation}
valid $\forall m \in \Z$. Restricting again to $m \geq 0$, $n=2$ yields the sequence of Oblong numbers $a_m = m(m+1)$ \cite{OEIS4}, subject to the recursion
\begin{equation*}
a_0 = 0, a_1 = 2, a_{m+1} = 2 a_m - a_{m-1} + 2, m \geq 1,
\end{equation*}
for $n=3$ we obtain the sequence $a_m = m(m+1)(m+2)$ \cite{OEIS5} obeying
\begin{equation*}
a_0 = 0, a_1 = 3!, a_2 = 4!, a_{m+1} = 3 a_m - 3 a_{m-1} + a_{m-2} + 6, m \geq 2
\end{equation*}
(see Z. Seidov, 2006, \cite{OEIS5}), and for $n=4$ the sequence of products of four consecutive integers $a_m = m(m+1)(m+2)(m+3)$  \cite{OEIS6} with
\begin{equation*}
a_0 = 0, a_1 = 4!, a_2 = 5!, a_3 = \frac{1}{2} 6!, a_{m+1} = 4 a_m - 6 a_{m-1} + 4 a_{m-2} - a_{m-3} + 24, m \geq 3 \, .
\end{equation*}
We note that already for $n=2$ and $n=4$, the recursions obtained here differ in form from those provided in OEIS \cite{OEIS} for the corresponding sequences. 


\section{The Family of $k$-generalized Fibonacci Numbers}
\label{S_FibFamily}

In the remainder of this contribution, we will apply the general results presented in Section \ref{S_ProdRep} to a less trivial case, namely the generalized Fibonacci sequences defined in (\ref{Eq_GenFibRec}). To that end, we set
\begin{equation}
x_{n,l} = - 2 i \cos\left[ \frac{l \pi}{n+1} \right] ,
\end{equation}
from which, using (\ref{Eq_GenFibProd}), immediately follows that
\begin{equation}
\label{Eq_FFibX}
X_{n,m} = \prod\limits_{l=1}^n \left( m - 2 i \cos\left[ \frac{l \pi}{n+1} \right] \right) \equiv F_{n+1}^{(m)} \, .\end{equation}
Furthermore, noting that $\cos \left[ \frac{l \pi}{n+1} \right], l \in [0,n]$ are the zeros of Chebyshev polynomials of the second kind $U_n(x)$, we have 
\begin{equation}
\label{Eq_FFibCalX}
\mathscr{X}_n = -2 i \sum\limits_{l=1}^n \cos\left[ \frac{l \pi}{n+1} \right] \equiv 0 \, ,
\end{equation}
where the orthogonality relations for Chebyshev polynomials were used. The first members of the family of sequences formed by (\ref{Eq_FFibX}) are visualized in Table \ref{Tab_FFibonacci}. Specifically, the second column $m=1$ contains the original Fibonacci sequence $F_n$, equation (\ref{Eq_Fibs}), and the third column $m=2$ the sequence of Pell numbers $P_n$, equation (\ref{Eq_Pells}), for $n \geq 1$. 

While individual columns yield subsequent sequences of generalized Fibonacci numbers, each row, for fixed $n$, generates new integer sequences whose elements are all generalized Fibonacci numbers. Specifically, for $n=2$, we obtain, for $m \geq 0$, the sequence $a_m = m^2 + 1$ \cite{OEIS7}, for $n=3$ the sequence $a_m = m^3 + 2m$ \cite{OEIS8} and for $n=4$ the sequence $a_m = m^4 + 3m^2 + 1$ \cite{OEIS9}. In general, for any given $n \in \N$, integer sequences are obtained whose explicit form is given in terms of Fibonacci polynomials (e.g., see \cite{AndreJeannin94}), polynomials of $n$th order in the sequence index $m$ of the form
\begin{equation}
\label{Eq_mFibRec}
a_m = \sum\limits_{l=0}^{\lfloor \frac{n}{2} \rfloor} \binom{n-l}{l} m^{n-2l} \equiv F_{n+1}^{(m)} \, .
\end{equation}
From Lemmata \ref{L_calX} and \ref{L_calXm}, we can immediately formulate

\begin{proposition}
\label{Cor_FFib1}
The family of generalized Fibonacci sequences $\{ F_n^{(m)} \}$, defined in  (\ref{Eq_FFibX}), obeys the following identities:
\begin{eqnarray}
\sum\limits_{l=1}^n (-1)^l \binom{n}{l} l F_{n+1}^{(l+m)} & = & \frac{1}{2} (-1)^n (2m+n+1) n n! \label{Eq_FFibCor1_1} \\
\sum\limits_{l=1}^n (-1)^l \binom{n}{l} l F_{n+1}^{(lm)} & = & \frac{1}{2} (-1)^n m^n n (n+1)! \label{Eq_FFibCor1_2}
\end{eqnarray}
for all $n \in \N$ and $m \in \Z$. 
\end{proposition}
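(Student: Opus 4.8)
The plan is to read off both identities directly from the general results established earlier, specializing to the Fibonacci family where $\mathscr{X}_n = 0$ by (\ref{Eq_FFibCalX}) and $X_{n,m} = F_{n+1}^{(m)}$ by (\ref{Eq_FFibX}). The entire proof reduces to substituting these two facts into Lemma \ref{L_calXm}, so no genuinely new work is needed.

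For the first identity (\ref{Eq_FFibCor1_1}), I would start from (\ref{Eq_calXm1}) in Lemma \ref{L_calXm},
\begin{equation*}
\mathscr{X}_n = \frac{(-1)^n}{n!} \sum\limits_{l=1}^n (-1)^l \binom{n}{l} \, l \, X_{n,l+m} - \frac{1}{2} n (n+1) - n m \, .
\end{equation*}
Setting the left side to $0$ and writing $X_{n,l+m} = F_{n+1}^{(l+m)}$ gives
\begin{equation*}
\frac{(-1)^n}{n!} \sum\limits_{l=1}^n (-1)^l \binom{n}{l} \, l \, F_{n+1}^{(l+m)} = \frac{1}{2} n (n+1) + n m = \frac{1}{2} n \left( 2m + n + 1 \right) \, .
\end{equation*}
Multiplying through by $(-1)^n n!$ yields (\ref{Eq_FFibCor1_1}) at once.

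For the second identity (\ref{Eq_FFibCor1_2}), I would do the same with (\ref{Eq_calXm2}): setting $\mathscr{X}_n = 0$ and $X_{n,lm} = F_{n+1}^{(lm)}$ gives
\begin{equation*}
\frac{(-1)^n}{n! \, m^{n-1}} \sum\limits_{l=1}^n (-1)^l \binom{n}{l} \, l \, F_{n+1}^{(lm)} = \frac{1}{2} n (n+1) m \, ,
\end{equation*}
and multiplying by $(-1)^n n! \, m^{n-1}$ produces the factor $m^n$ and the coefficient $\tfrac{1}{2}(-1)^n m^n n(n+1)!$, which is exactly (\ref{Eq_FFibCor1_2}).

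The only step requiring more than mechanical substitution is the justification that $\mathscr{X}_n = 0$; this is supplied by (\ref{Eq_FFibCalX}), where the vanishing of $\sum_{l=1}^n \cos[l\pi/(n+1)]$ follows from the symmetry of the roots of the Chebyshev polynomial $U_n$ about the origin (equivalently, $\cos[l\pi/(n+1)] = -\cos[(n+1-l)\pi/(n+1)]$, so the terms cancel in pairs, with the middle term vanishing when $n$ is even). Since this fact is already stated in the excerpt, I would regard the proposition as an immediate corollary of Lemma \ref{L_calXm}, and I anticipate no real obstacle beyond keeping the sign and factorial bookkeeping straight when clearing denominators.
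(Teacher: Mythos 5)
Your proposal is correct and is exactly the paper's own argument: the paper proves this proposition by citing (\ref{Eq_calXm1}) and (\ref{Eq_calXm2}) together with (\ref{Eq_FFibX}) and (\ref{Eq_FFibCalX}), which is precisely the substitution $\mathscr{X}_n = 0$, $X_{n,m} = F_{n+1}^{(m)}$ that you carry out. You have merely made the sign and factorial bookkeeping explicit, which the paper leaves to the reader.
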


\begin{proof}
Both identities are a direct consequence of (\ref{Eq_calXm1}) and (\ref{Eq_calXm2}), using (\ref{Eq_FFibX}) and (\ref{Eq_FFibCalX}).
\end{proof}

\begin{table}
\caption{\label{Tab_FFibonacci} The first members of the family of generalized Fibonacci numbers $X_{n,m} = F_{n+1}^{(m)}, n \in \N, m \in \Z$, defined explicitly in (\ref{Eq_FFibX}) and subject to the recursive relation (\ref{Eq_GenFibRec}).}
\begin{tabular}{|c|cccccccccc|}
\hline
\diagbox{n}{m} & \ldots & 0 & 1 & 2 & 3 & 4 & 5 & 6 & 7 & \ldots \\
\hline
1 & & 0 & 1 & 2 & 3 & 4 & 5 & 6 & 7 & \\
2 & & 1 & 2 & 5 & 10 & 17 & 26 & 37 & 50 & \\
3 & & 0 & 3 & 12 & 33 & 72 & 135 & 228 & 357 & \\
4 & & 1 & 5 & 29 & 109 & 305 & 701 & 1405 & 2549 & \\
5 & & 0 & 8 & 70 & 360 & 1292 & 3640 & 8658 & 18200 & \\
6 & & 1 & 13 & 169 & 1189 & 5473 & 18901 & 53353 & 129949 & \\
7 & & 0 & 21 & 408 & 3927 & 23184 & 98145 & 328776 & 927843 & \\
\vdots & & & & & & & & & & \\
\hline
\end{tabular}
\end{table}

Equation (\ref{Eq_FFibCor1_2}) can be used to link generalized Fibonacci numbers $F_n^{(m)}$ for positive and negative $m$. Specifically, subtracting (\ref{Eq_FFibCor1_2}) for a given $m$ and $m \rightarrow -m$, we obtain
\begin{equation}
\sum\limits_{l=1}^n (-1)^l \binom{n}{l} l \left( F_{n+1}^{(lm)} - F_{n+1}^{(-lm)} \right)  = \frac{1}{2} m^n \left( (-1)^n - 1 \right) n (n+1)! 
\end{equation}
which, for $m=1$, yields
\begin{equation}
\label{Eq_FibPosNeg1}
\sum\limits_{l=1}^n (-1)^l \binom{n}{l} l \left( F_{n+1}^{(-l)} - F_{n+1}^{(l)} \right)  = \frac{1}{2} \left( 1 - (-1)^n \right) n (n+1)! 
= \left\{ \begin{array}{ll} 
0 & n \text{ even} \\ 
n(n+1)! & n \text{ odd.}
\end{array} \right.
\end{equation}
Furthermore, application of Proposition \ref{P_XnmIdent} to the family of Fibonacci sequences leads to

\begin{proposition}
\label{Cor_FFib2}
The family of Fibonacci sequences $F_n^{(m)}$ obeys $\forall n \in \N$ the following relations
\begin{eqnarray}
\sum\limits_{l=0}^n (-1)^l \binom{n}{l} F_{n+1}^{(l+m-n+1)} & = & (-1)^n n! \label{Eq_FFibCor2_1} \\
\sum\limits_{l=1}^n (-1)^l \binom{n}{l} l \left( F_{n+1}^{(lm)} - m^{n-1} F_{n+1}^{(l)} \right) & = & \frac{1}{2} (-1)^{n-1} m^{n-1} (1-m) n (n+1)! \label{Eq_FFibCor2_2}
\end{eqnarray}
for all $m \in \Z$. 
\end{proposition}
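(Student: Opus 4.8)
The plan is to obtain both identities as direct specializations of Proposition \ref{P_XnmIdent} to the Fibonacci family, using the identification $X_{n,m} = F_{n+1}^{(m)}$ from (\ref{Eq_FFibX}). Because Proposition \ref{P_XnmIdent} already packages the content of Lemmata \ref{L_calX} and \ref{L_calXm}, no new summation work is needed: the task reduces to a pair of index manipulations followed by the substitution $X_{n,m}\mapsto F_{n+1}^{(m)}$. The value $\mathscr{X}_n=0$ recorded in (\ref{Eq_FFibCalX}) is consistent with this picture but is not itself invoked, since $\mathscr{X}_n$ has already been eliminated from (\ref{Eq_XnmRec}) and (\ref{Eq_XnmNeg1}).

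For (\ref{Eq_FFibCor2_1}) I would start from the recursion (\ref{Eq_XnmRec}) and shift the summation index $l\to l+1$, converting $\binom{n}{l-1}$ into $\binom{n}{l}$ and the argument $l+m-n$ into $l+m-n+1$ over the range $l\in[0,n-1]$. The isolated term $X_{n,m+1}$ is, up to sign, exactly the missing $l=n$ contribution $(-1)^n\binom{n}{n}X_{n,m+1}$, so folding it into the sum extends the range to $l\in[0,n]$ and cancels the stray $X_{n,m+1}$, leaving $\sum_{l=0}^n(-1)^l\binom{n}{l}X_{n,l+m-n+1}=(-1)^n n!$. This is precisely the intermediate identity derived directly from (\ref{Eq_XnmRec}) at the start of the proof of Corollary \ref{Cor_Xnm2} (with $m$ shifted by one), so it comes essentially for free; substituting $X_{n,m}=F_{n+1}^{(m)}$ yields (\ref{Eq_FFibCor2_1}).

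For (\ref{Eq_FFibCor2_2}) I would take (\ref{Eq_XnmNeg1}), multiply through by $m^{n-1}$, and transpose the $X_{n,l}$ sum to the left-hand side. The two sums then merge into $\sum_{l=1}^n(-1)^l\binom{n}{l}\,l\,(X_{n,lm}-m^{n-1}X_{n,l})$, while the right-hand side becomes $\tfrac{1}{2}(-1)^{n-1}m^{n-1}(1-m)n(n+1)!$; the substitution $X_{n,m}=F_{n+1}^{(m)}$ then reproduces (\ref{Eq_FFibCor2_2}) verbatim.

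The one point requiring genuine care — the only step not handled automatically by Proposition \ref{P_XnmIdent} — is that (\ref{Eq_XnmNeg1}) is stated only for $m\neq 0$, whereas (\ref{Eq_FFibCor2_2}) is asserted for all $m\in\Z$. I would therefore dispose of $m=0$ by direct evaluation. For $n\geq 2$ both sides vanish: the right-hand side carries the factor $m^{n-1}=0$, while on the left every $F_{n+1}^{(lm)}$ collapses to the constant $F_{n+1}^{(0)}$ and the surviving factor $\sum_{l=1}^n(-1)^l\binom{n}{l}\,l$ is zero. The degenerate case $n=1$ (where $m^{n-1}=0^0=1$) must then be checked by hand against the entries of Table \ref{Tab_FFibonacci}, confirming that both sides equal $1$. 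Apart from this boundary bookkeeping, the proposition is an immediate corollary of the general relations already established.
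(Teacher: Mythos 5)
Your proposal is correct and follows essentially the paper's own route: both identities are obtained by specializing the general relations (\ref{Eq_XnmRec}) and (\ref{Eq_XnmNeg1}) of Proposition \ref{P_XnmIdent} to the Fibonacci family via the identification $X_{n,m}=F_{n+1}^{(m)}$ from (\ref{Eq_FFibX}), with exactly the index shift and transposition you describe. You are in fact more careful than the paper, which cites (\ref{Eq_FFibCalX}) even though $\mathscr{X}_n$ no longer appears in (\ref{Eq_XnmRec}) or (\ref{Eq_XnmNeg1}), and which silently extends (\ref{Eq_XnmNeg1}) (stated only for $m\neq 0$) to all $m\in\Z$; your direct verification at $m=0$ for $n\geq 2$ and the hand check of the degenerate case $n=1$ close that small gap.
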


\begin{proof}
Both identities are a consequence of (\ref{Eq_XnmRec}) and (\ref{Eq_XnmNeg1}), using (\ref{Eq_FFibX}) and (\ref{Eq_FFibCalX}). Relation (\ref{Eq_FFibCor2_2}) can be directly obtained also from (\ref{Eq_FFibCor1_2}).
\end{proof}

We note that equation (\ref{Eq_FFibCor2_2}) is a special application of (\ref{Eq_FFibCor1_2}), which for $m=-1$ yields
\begin{equation}
\sum\limits_{l=1}^n (-1)^l \binom{n}{l} l \left( F_{n+1}^{(-l)} + (-1)^n F_{n+1}^{(l)} \right) = n (n+1)! 
\end{equation}
$\forall n \in \N$, complementing (\ref{Eq_FibPosNeg1}) above. Interestingly, (\ref{Eq_FFibCor2_1}) allows to construct, for any given $n$, general recursive relations in $m$ for generalized Fibonacci numbers $F_n^{(m)}$:
\begin{equation}
\label{Eq_GenFibRecM}
F_n^{(m+1)} = \sum\limits_{l=0}^{n-1} (-1)^l \binom{n}{l+1} F_n^{(m-l)} + n!
\end{equation}
valid $\forall m \in \Z$. Restricting to $m \geq 0$, the resulting sequence $a_m \equiv F_2^{(m+1)} = m^2 + 1$ \cite{OEIS7} obtained for $n=2$ from (\ref{Eq_FFibCor2_1}), obeys
\begin{equation*}
a_0 = 1, a_1 = 2, a_{m+1} = 2 a_m - a_{m-1} + 2, m \geq 1
\end{equation*}
(see E. Werley, 2011, \cite{OEIS7}). Similarly, for $n=3$, the integer sequence given by the third-order polynomial $a_m \equiv F_3^{(m+1)} = m^3 + 2m$ \cite{OEIS8}, obeys the recursive relation
\begin{equation*}
a_0 = 0, a_1 = 3, a_2 = 12, a_{m+1} = 3 a_m - 3 a_{m-1} + a_{m-2} + 6, m \geq 2 \, ,
\end{equation*}
and for $n=4$, the sequence $a_m \equiv F_4^{(m+1)} = m^4 + 3m^2 + 1$ \cite{OEIS9} is subject to the recursion
\begin{equation*}
a_0 = 1, a_1 = 5, a_2 = 29, a_2 = 109, a_{m+1} = 4a_m - 6 a_{m-1} + 4a_{m-2} - a_{m-3} + 24, m \geq 3 \, .
\end{equation*}

Finally, Corollaries \ref{Cor_Xnm1} and \ref{Cor_Xnm2} provide explicit representations of generalized Fibonacci numbers in terms of other members of the Fibonacci family:

\begin{proposition}
\label{Cor_FFib3}
Generalized Fibonacci numbers $F_n^{(m)}$ obey
\begin{eqnarray}
F_{n+1}^{(m)} & = & (-1)^n \binom{m}{n} \sum\limits_{l=0}^{n-1} (-1)^l \binom{n}{l} \frac{n-l}{l-m} F_{n+1}^{(l)} + \frac{m!}{(m-n)!} \label{Eq_FFibCor3_1} \\
F_{n+1}^{(-m)} & = & (-1)^n \binom{m}{n} \sum\limits_{l=0}^{n-1} (-1)^l \binom{n}{l} \frac{n-l}{l-m} F_{n+1}^{(-l)} + (-1)^n \frac{m!}{(m-n)!} \label{Eq_FFibCor3_2} 
\end{eqnarray}
$\forall n, m \in \N$ with $m \geq n$, and
\begin{eqnarray}
F_{n-p+1}^{(m)} & = & (-1)^{n+1} n^{-q} \sum\limits_{l=0}^{n-1} (-1)^l \binom{n}{l} l^q F_{n-p+1}^{(m-n+l)} \label{Eq_FFibCor3_3} \\
F_{n-p+1}^{(m)} & = & (-1)^{n+1} n^{-p} \sum\limits_{l=0}^{n-1} (-1)^l \binom{n}{l} l^p F_{n-p+1}^{(m-n+l)} + n^{-p} n!\label{Eq_FFibCor3_4}
\end{eqnarray}
for all $n,p \in \N$ with $n \geq p+1, p \geq 1$, $0 \leq q < p$ and $m \in \Z$.
\end{proposition}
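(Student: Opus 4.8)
The plan is to obtain all four identities by substituting the product representation (\ref{Eq_FFibX}), namely $X_{n,m} = F_{n+1}^{(m)}$, into the universal identities of Corollaries \ref{Cor_Xnm1} and \ref{Cor_Xnm2}, which already hold for every family of the form (\ref{Eq_Xnm}). Since those corollaries were derived purely from the recursion (\ref{Eq_XnmRec}) and make no reference to $\mathscr{X}_n$, no Fibonacci-specific input beyond the identification (\ref{Eq_FFibX}) is in fact required; the vanishing $\mathscr{X}_n = 0$ is what drives the earlier Propositions \ref{Cor_FFib1} and \ref{Cor_FFib2}, but plays no role here.

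First I would establish (\ref{Eq_FFibCor3_1}) and (\ref{Eq_FFibCor3_2}). Writing $X_{n,m} = F_{n+1}^{(m)}$ and $X_{n,l} = F_{n+1}^{(l)}$ in the two displays of Corollary \ref{Cor_Xnm1}, I split the sign as $(-1)^{n+l} = (-1)^n (-1)^l$ and pull the $l$-independent factor $(-1)^n \binom{m}{n}$ out of the sum. This converts the first display of Corollary \ref{Cor_Xnm1} into (\ref{Eq_FFibCor3_1}) and the second into (\ref{Eq_FFibCor3_2}), with the constraint $m \geq n$ carrying over unchanged.

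For (\ref{Eq_FFibCor3_3}) and (\ref{Eq_FFibCor3_4}), I would substitute $X_{n-p,\,m-n+l} = F_{n-p+1}^{(m-n+l)}$ into the two displays of Corollary \ref{Cor_Xnm2}. The essential manoeuvre is to detach the $l=n$ summand: since $\binom{n}{n}=1$ and the argument collapses to $m-n+n = m$, this term equals $(-1)^n n^q F_{n-p+1}^{(m)}$ for the first display and $(-1)^n n^p F_{n-p+1}^{(m)}$ for the second. Isolating it leaves a sum running only to $n-1$, and solving for $F_{n-p+1}^{(m)}$, that is, dividing by $n^q$ (respectively $n^p$, both nonzero since $n \geq p+1 \geq 2$) and using $(-1)^{-n-1} = (-1)^{n+1}$, yields (\ref{Eq_FFibCor3_3}) and (\ref{Eq_FFibCor3_4}); the additive term $n^{-p} n!$ in the latter arises from the $-(-1)^n n!$ already present in Corollary \ref{Cor_Xnm2}.

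The whole argument is mechanical, amounting to a change of notation together with one algebraic rearrangement. The only point demanding care is the bookkeeping when excising the $l=n$ term, so that the shift of the upper summation limit from $n$ to $n-1$ correctly produces the leading term $F_{n-p+1}^{(m)}$ together with the right power of $n$ and the correct overall sign; beyond that, no obstacle is anticipated.
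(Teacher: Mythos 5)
Your proposal is correct and takes essentially the same route as the paper: the paper's proof simply substitutes $X_{n,m} = F_{n+1}^{(m)}$ into (\ref{Eq_XnmExp1})--(\ref{Eq_XnmExp2}) for the first pair of identities and into (\ref{Eq_XnmId1})--(\ref{Eq_XnmId2}) for the second pair, which is exactly the sign-splitting and the detachment of the $l=n$ term that you carry out. Your side remark that $\mathscr{X}_n = 0$ plays no role here is also accurate, and in fact slightly sharper than the paper, whose proof cites (\ref{Eq_FFibCalX}) even though Corollaries \ref{Cor_Xnm1} and \ref{Cor_Xnm2} are family-independent.
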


\begin{proof}
The first two identities are a direct consequence of (\ref{Eq_XnmExp1}) and (\ref{Eq_XnmExp1}), the last two can be shown with (\ref{Eq_XnmId1}) and (\ref{Eq_XnmId2}), using (\ref{Eq_FFibX}) and (\ref{Eq_FFibCalX}).
\end{proof}

We note that Propositions \ref{Cor_FFib1} to \ref{Cor_FFib3} provide a number of identities which interlink the set of generalized Fibonacci sequences defined in (\ref{Eq_FFibX}). Specifically, the defining recursive relations in $n$, equation (\ref{Eq_GenFibRec}), for generalized Fibonacci numbers allow to express each $F_n^{(m)}$ in terms of $F_{n'}^{(m)}$, $n' < n$ for fixed $m$, whereas relations (\ref{Eq_FFibCor1_1})--(\ref{Eq_FFibCor1_2}), (\ref{Eq_FFibCor2_1})--(\ref{Eq_FFibCor2_2}) and (\ref{Eq_FFibCor3_1})--(\ref{Eq_FFibCor3_4}) allow to express each $F_n^{(m)}$ in terms of $F_n^{(m')}$, $m' \neq m$ for any given $n$. Combining both sets of identities, we arrive at linking all members of the family of generalized Fibonacci numbers.


\section{Concluding Remarks}

In this contribution, we investigated general properties of number sequences generated through products of the form (\ref{Eq_Xnm}). We found several identities and recursive relations which interlink such sequences and suggest their classification in terms of families (Definition \ref{Def_Family}). Although the families studied here as examples describe different integer sequences, such as Pochhammer numbers, powers of integers or generalized Fibonacci numbers, we find that each of these families is subject to the same set of identities which, in some cases, generalize interesting relations between these known sequences. 

The examples presented here constitute but a small set of potential applications. For instance, $q$-Poch\-hammer sequences and sequences produced by products of Pochhammer numbers are obtained for $x_{n,l} = a^l, a \in \R$ and $x_{n,l} = l^a, a \in \Z$, respectively. The general relations listed in Section \ref{S_ProdRep} apply in these cases, and provide a number of identities obeyed by the corresponding sequences. By setting 
\begin{equation*}
x_{n,l} = -2 \sqrt{q} \cos \left[ \frac{l \pi}{n+1} \right]
\end{equation*}
we obtain, with (\ref{Eq_ProdRepLucas}), general Lucas sequences \cite{OEIS10}, i.e.
\begin{equation*}
X_{n,m} = \frac{1}{\sqrt{q}} \prod\limits_{l=1}^n \left( m - 2 \sqrt{q} \cos \left[ \frac{l \pi}{n+1} \right] \right) \equiv L_{n+1}^{(m,q)} \, .
\end{equation*}
For appropriate $m$ and $q$, interesting identities, such as grandma's identity \cite{Humble04}, signed bisections of Fibonacci sequences and relations interlinking powers of Fibonacci numbers, are obtained. The study of these relations, their potential generalization and application to other families of number sequences might provide novel and potentially useful insights into properties shared by qualitatively different number sequences.


\medskip

\noindent MSC2010: 11B39, 32A05, 40B05
\end{document}